\definecolor{mygray}{gray}{.9}
\newcommand{\Be}{{\boldsymbol{e}}}
\newcommand{\Bf}{{\boldsymbol{f}}}
\newcommand{\Bg}{{\boldsymbol{g}}}
\newcommand{\Bu}{{\boldsymbol{u}}}
\newcommand{\Bw}{{\boldsymbol{w}}}
\newcommand{\Ce}{\mathcal{E}}
\newcommand{\Cg}{\mathcal{G}}
\newcommand{\Cj}{\mathcal{J}}
\newcommand{\Cm}{\mathcal{M}}
\newcommand{\Cp}{\mathcal{P}}
\newcommand{\Cu}{\mathcal{U}}
\newcommand{\Cw}{\mathcal{W}}
\newcommand{\bbC}{\mathbb{C}}
\newcommand{\bbR}{\mathbb{R}}
\newcommand{\bbN}{\mathbb{N}}
\newcommand{\bbZ}{\mathbb{Z}}
\newcommand{\D}{\mathrm{d}}
\newcommand{\ket}[1]{| #1 \rangle} 
\newcommand{\vect}{\boldsymbol}
\def \d {\mathrm{d}}
\newcounter{parentalgorithm}
\newtheorem{theorem}{Theorem}[section]
\newtheorem{lemma}{Lemma}[section]
\newtheorem{definition}{Definition}[section]
\theoremstyle{remark}
\newtheorem{remark}{\bf Remark}[section]
\theoremstyle{assumption}
\numberwithin{equation}{section}
\begin{document}

	\title{Schr\"odingerisation based computationally stable algorithms for  ill-posed  problems in partial differential equations} 

\author[1,2,3]{Shi Jin   \thanks{shijin-m@sjtu.edu.cn}}	
\author[2,3,4]{Nana Liu
\thanks{nanaliu@sjtu.edu.cn}}
\author[1]{Chuwen Ma  
\thanks{chuwenii@sjtu.edu.cn}
\footnote{Corresponding author.}}	 
	\affil[1]{School of Mathematical Sciences,   Shanghai Jiao Tong University, Shanghai 200240, China.}
	\affil[2]{Institute of Natural Sciences, Shanghai Jiao Tong University, Shanghai 200240, China.}
	\affil[3]{Ministry of Education, Key Laboratory in Scientific and Engineering Computing, Shanghai Jiao Tong University, Shanghai 200240, China.} 
\affil[4] {University of Michigan-Shanghai Jiao Tong University Joint Institute, Shanghai 200240, China}


\maketitle
\begin{abstract}
	We introduce a simple and stable computational method for ill-posed partial differential equation (PDE) problems. The method is based on Schr\"odingerisation, introduced in [S. Jin, N. Liu and Y. Yu, arXiv:2212.13969][S. Jin, N. Liu and Y. Yu, Phys. Rev. A, 108 (2023), 032603], which maps all linear PDEs into Schr\"odinger-type equations in one higher dimension, for quantum simulations of these PDEs.  Although the original problem is ill-posed, the Schr\"odingerised equations are Hamiltonian systems and time-reversible, allowing stable computation both forward and backward in time. The original variable can be recovered by data from suitably chosen domain in the extended dimension.  We will use the (constant and variable coefficient) backward heat equation and the linear convection equation with imaginary wave speed as examples. Error analysis of these algorithms are conducted and verified numerically. The methods are applicable to both classical and quantum computers, and we also lay out quantum algorithms for these methods.  Moreover, we introduce a smooth initialisation for the Schr\"odingerised equation which will lead to essentially spectral accuracy for the approximation in the extended space, if a spectral method is used.  Consequently, the extra qubits needed due to the extra dimension, if a qubit based quantum algorithm is used, for both well-posed and ill-posed problems, becomes almost
	$\log\log {1/\varepsilon}$ where $\varepsilon$ is the desired precision. This optimizes the complexity of the Schr\"odingerisation based quantum algorithms for any non-unitary dynamical system introduced in [S. Jin, N. Liu and Y. Yu, arXiv:2212.13969][S. Jin, N. Liu and Y. Yu, Phys. Rev. A, 108 (2023), 032603].
\end{abstract}
\textbf{Keywords}: 	ill-posed problems, backward heat equation, Schr\"odingerisation, quantum algorithms
\textbf{MSCcodes:}	65J20,  65M70, 81-08

\section{Introduction}

In this paper, we are interested in numerically computing ill-posed problems that follow the evolution of a general dynamical system
\begin{equation}\label{eq:ODE1}
	\frac{\D}{\D t} \Bu = H \Bu 
\end{equation}
 whose input data  given by $\Bu_0\in \bbR^n$.
For simplicity, we first consider  $H$ being Hermitian with  $n$ real eigenvalues, ordered as	\begin{equation}\label{eq:eigenvalues H1}
	\lambda_1(H)\leq \lambda_2(H)\leq \cdots\lambda_{n}(H), \quad 
	\text{for}\; \text{all}\; t\in [0,T].
\end{equation}
We assume that  $\lambda_i(H)>0$, for some $1\le i\le n$, which implies that the system \eqref{eq:ODE1} contains unstable modes, thus the initial value problem  is ill-posed  since its solution will grow exponentially in time.   This causes significant computational challenges. Normally the numerical errors will grow exponentially unless special care is taken.

Ill-posed or unstable problems appear in many physical applications, for example, fluid dynamics instabilities such as Rayleigh-Taylor and Kevin-Helmholtz instabilities \cite{kull1991theory, funada2001viscous}, plasma instability \cite{hasegawa2012plasma}, and Maxwell equations with negative index of refraction \cite{shelby2001experimental, parazzoli2003experimental}. It also appears in inverse problems \cite{uhlmann2014inverse, bertero2021introduction}. 
One of the most classical ill-posed problems is the backward heat equation which suffers from the catastrophic Hadamard instability. Usually, some regularization technique is used to make the problems well-posed \cite{engl1989convergence}. 

In this paper,  we study a generic yet simple computational strategy to numerically compute ill-posed problems based on Schr\"odingerisation, introduced for quantum simulation for dynamical systems whose evolution operator is non-unitary \cite{JLY22a, JLY22b}.
The idea is to map it to one higher dimension, into Schr\"odinger-type equations, that obeys unitary dynamics and thereby naturally fitting for quantum simulation.  Since the Schr\"odinger-type equations are Hamiltonian systems that are time-reversible, they can be solved  both forwards and backwards  in time in a computationally stable way. This makes it suitable for solving unstable problems, as was proposed in our previous work \cite{JLM24}.
In this article, we study this method for the classical backward heat equation, and a linear convection equation with imaginary wave speed (or negative index of refraction). 

The initial-value problem to the  backward heat equation  is  ill-posed in all three ways: (i) the solution does not necessarily exist; (ii) if the solution exists, it is not necessarily unique; (iii) there is no continuous dependence of the solution on arbitrary input data \cite{Na58,John55,Hollig83, lin2003remarks}.  This problem is well-posed for final data whose Fourier spectrum has compact support \cite{Miranker61}.
However, even when the solution exists and is unique, computing the solution is difficult since unstable physical systems usually lead to unstable numerical methods.  There have been various treatments of this difficult problem, for example quasi-reversibility methods \cite{XFQ06, Show74, LL76, AGE98, Miller73}, regularization methods \cite{Hao94,Mera01,Show74, biccari2023two}, Fourier truncation methods \cite{QFS07}, etc.  Our approach computes solution consistent with the Fourier truncation method. 

The actual implementation of our backward heat equation solver is as follows. First we start with the Fourier transform of the input data denoted by $u_T$ and truncate the Fourier mode in finite domain. This is either achieved automatically if the Fourier mode of $u_T$ has compact support, or we choose a sufficiently large domain in the Fourier space such that outside it the Fourier coefficient of  $u_T$ is sufficiently small. The latter can usually be done since the forward heat equation gives a solution that is smooth for $t>0$ so its Fourier coefficient decays rapidly. The Schr\"odingerisation technique lifts the backward heat equation to a Schr\"odinger equation in one higher dimension that is {\it time reversible}, which can be solved backward in time by any reasonable stable numerical approximation. We then recover $u(t)$ for $0\le t<T$ by integrating or pointwise evaluation over suitably chosen domain in the extended variable. Since the time evolution is based on solving the Schr\"odinger equation, the computational method is stable. 

We point out that the truncation in the Fourier space regularizes the original ill-posed problem. Although the initial-value problem to the Schr\"odingerised equation is well-posed even without this Fourier truncation, to recover $u$ one needs {\it finite} Fourier mode. We also show that the strategy also applies to variable coefficient heat equation.

We will also apply the  same strategy  to solve the unstable linear convection equation that has imaginary wave speed. 

We will give error estimates on these methods and conduct numerical methods that verify the results of the error analysis.  The methods work for both classical and quantum computers. Since the original Schr\"odingerisation method was introduced for quantum simulation of general PDEs, we will also give the quantum implementation of the computational method.

Moreover, we introduce a smooth initialization for the Schr\"odingerised equation, so the initial data will be in $C^k(\bbR)$ in the extended space for any integer 
$k\ge 1$ (see section \ref{higher-p}).  This will lead to essentially the spectral accuracy for the approximation in the extended space, if a spectral method is used.  Consequently, the extra qubits needed due to the extra dimension, if a qubit based quantum algorithm is used,
for both well-posed and ill-posed problems, becomes almost
$\log\log {1/\varepsilon}$ where $\varepsilon$ is the desired precision. This {\it reduces} the complexity of Schr\"odingerisation based quantum algorithms introduced in \cite{JLY22a, JLY22b} for any non-unitary  dynamical system.

The rest of the paper is organized as follows. In section~2, we give a brief review of the Schr\"odingerisation approach for general linear ODEs. In section 3, we study the backward heat equation, for both constant and variable coefficient cases,  and show the approximate solution based on the framework of Schr\"odingerisation. In section 4, the numerical method and error analysis of the backward heat equation are presented. In addition, a $C^k$ smooth initial data with respect to the extended variable is constructed to improve the convergence rates in the extended space. In section 5, we apply this technique to the convection equations with purely imaginary wave speed. In section~6, we show the numerical tests to verify our theories. Section 7 shows the quantum algorithms  and the corresponding complexity.

Throughout the paper, we restrict the simulation to a finite time interval $t\in [0,T]$.
The notation $f\lesssim g$ stands for $f\leq Cg$ where $C$ is independent of the mesh size and time step.
Scalar-valued quantities and vector-valued quantities are denoted by normal symbols and boldface symbols, respectively.
Moreover, we use a 0-based indexing, i.e. $j= \{0,1,\cdots,N-1\}$, or $j\in [N]$, and  $\Be_{j}^{(N)}\in \bbR^N$, denotes a vector with the $j$-th component being $1$ and others $0$.
We shall denote the identity matrix and null matrix by $I$ and $\textbf{0}$, respectively,
and the dimensions of these matrices should be clear from the context, otherwise,
the notation $I_N$ stands for the $N$-dimensional identity matrix.

\section{The general framework}\label{sec:The general framework}

We start with the basic framework set up in \cite{JLM24}   for forward problems which we first briefly review here. 

Using the warped phase transformation $\Bw(t,p) = e^{-p}\Bu$ for $p>0$ and symmetrically extending the initial data to $p<0$, one has the following system of linear convection equations \cite{JLY22a, JLY22b}:
\begin{equation}\label{eq:up}
	\frac{\D}{\D t} \Bw = -H \partial_p \Bw  \qquad 
	\Bw(0,p)= e^{-|p|}\Bu_0.
\end{equation}
This is clearly a hyperbolic system. When the eigenvalues of $H$ are all negative, the convection term of \eqref{eq:up} corresponds to waves moving from the right to the left.  One does, however, need a boundary condition on the right-hand side. 
Since $\Bw$ decays exponentially in $p$, one just needs to select $p=p^R$ large enough,  so $w$ at this point is essentially zero  and a zero incoming boundary condition can be used at $p=p^R$. Then $\Bu$ can be numerically recovered via
	\begin{equation}\label{p>0}
		\Bu(t)= \frac{1}{1-e^{-p^R}}\int_0^{p^R} \Bw(t,p)\, dp, 
	\end{equation}
	or 
	\begin{equation}\label{p>0-1}
	\Bu(t)=e^{p} \Bw(t,p) \qquad {\text{ for any}}\quad  0<p<p^R.
	\end{equation}

	However, when $\lambda_i(H)>0$ for some $i$, some of the waves evolving through \eqref{eq:up} will instead propagate from left to right. If we were solving the problem in domain $p\in [0, \infty)$, we would need a boundary condition at $p=0$, which is not possible.
	Instead, we extend the computational  domain to also include  $[-p^L, 0)$ with   a zero incoming boundary  at $p=-p^L$ for $p^L>0$ sufficiently large (so $e^{-p^L} \approx 0$). 
	In summary, we use zero boundary condition for \eqref{eq:up} restricted on the finite domain $[-p^L, p^R]$.
	It is worth noting that extending to the $p<0$ domain is done for the convenience of employing the Fourier spectral method, which requires periodic boundary conditions, and our zero boundary conditions can be regarded, approximately, as periodic boundary condition.  It is important to note that any waves--corresponding to positive eigenvalues of $H$-- that start from $p<0$ and travel to the right will induce spurious solutions to the domain $p>0$, therefore, when recovering $u$, one needs to select the correct domain to avoid using these spurious waves.  Therefore,  the following theorem \cite{JLM24} must be utilized.

\begin{theorem}\label{thm:recovery u}
	Assume the eigenvalues of $H$ satisfy \eqref{eq:eigenvalues H1}, then the  solution of 
	\eqref{eq:ODE1} can be recovered by
	\begin{equation}\label{eq:recover by one point}
		\Bu(t) = e^{p} \Bw(t,p),\quad \text{for}\;\text{any}\; p> p^{\Diamond},
	\end{equation}
	where $p^{\Diamond}= \max\{\lambda_n(H) t,0\}$, or recovered by using the integration,
	\begin{equation}\label{eq:recover by quad}
		\Bu(t) = e^{p}\int_{p}^{\infty} \Bw(t,q)\;\d q,\quad 
		p> p^{\Diamond}.
	\end{equation}
\end{theorem}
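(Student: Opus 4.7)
The plan is to solve \eqref{eq:up} explicitly by diagonalizing $H$ and then use the method of characteristics on the resulting decoupled linear convection equations. Since $H$ is assumed Hermitian, write $H = P\Lambda P^{-1}$ with $\Lambda = \mathrm{diag}(\lambda_1,\dots,\lambda_n)$ and set $\Bv(t,p) = P^{-1}\Bw(t,p)$. Each component then satisfies the scalar transport equation $\partial_t v_i = -\lambda_i \partial_p v_i$ with initial datum $v_i(0,p) = e^{-|p|}(P^{-1}\Bu_0)_i$, so by characteristics
\begin{equation*}
v_i(t,p) = e^{-|p-\lambda_i t|}(P^{-1}\Bu_0)_i.
\end{equation*}

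Next, I exploit the choice of $p^{\Diamond}$. For any $p \geq p^{\Diamond} \geq \max\{\lambda_n t,0\}$ and every eigenvalue $\lambda_i \leq \lambda_n$, one has $p - \lambda_i t \geq p - \lambda_n t \geq 0$, so the absolute value disappears:
\begin{equation*}
v_i(t,p) = e^{-(p-\lambda_i t)}(P^{-1}\Bu_0)_i = e^{-p}\,e^{\lambda_i t}(P^{-1}\Bu_0)_i.
\end{equation*}
Re-assembling $\Bw = P\Bv$ gives $\Bw(t,p) = e^{-p}\,P e^{\Lambda t} P^{-1}\Bu_0 = e^{-p} e^{Ht}\Bu_0 = e^{-p}\Bu(t)$ for all $p \geq p^{\Diamond}$, which is exactly \eqref{eq:recover by one point}.

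For the integral representation \eqref{eq:recover by quad}, note that the identity $\Bw(t,q) = e^{-q}\Bu(t)$ just derived holds for all $q \geq p^{\Diamond}$, and in particular on $[p,\infty)$ whenever $p \geq p^{\Diamond}$. A direct computation then gives
\begin{equation*}
\int_{p}^{\infty}\Bw(t,q)\,\mathrm{d}q = \Bu(t)\int_{p}^{\infty}e^{-q}\,\mathrm{d}q = e^{-p}\Bu(t),
\end{equation*}
and multiplying by $e^{p}$ yields \eqref{eq:recover by quad}.

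Conceptually, the only subtle point is the choice of the threshold $p^{\Diamond}$: one has to ensure that on the recovery region the characteristics emanating from all eigencomponents have not yet crossed the kink at $p=0$ of the even extension $e^{-|p|}$. Taking $p^{\Diamond} \geq \max\{\lambda_n t,0\}$ is precisely the condition that kills the absolute value uniformly in $i$, after which the proof is essentially a one-line spectral calculation. No other obstacle arises; the extension to a non-Hermitian but diagonalizable $H$ with real eigenvalues would work verbatim through the same spectral decomposition.
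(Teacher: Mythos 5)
Your proof is correct: diagonalizing $H$ decouples \eqref{eq:up} into scalar transports, the characteristics solution $v_i(t,p)=e^{-|p-\lambda_i t|}(P^{-1}\Bu_0)_i$ loses the absolute value for $p\ge p^\Diamond\ge\max\{\lambda_n t,0\}$ (using $t\ge 0$ from the paper's restriction to $[0,T]$ together with $\lambda_i\le\lambda_n$), and reassembling yields $\Bw(t,p)=e^{-p}e^{Ht}\Bu_0=e^{-p}\Bu(t)$, from which both \eqref{eq:recover by one point} and \eqref{eq:recover by quad} follow immediately. This is essentially the argument behind the cited reference \cite{JLM24} and is the finite-dimensional analogue of the Fourier-side computation the paper itself performs in \eqref{eq:Fourier transform w}--\eqref{eq:w}.
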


Since \eqref{eq:up} is a {\it hyperbolic} system, thus the initial value problem is {\it well-posed} and one can solve it numerically in a stable way (as long as suitable numerical stability condition--the CFL condition--is satisfied). Thus although the original system \eqref{eq:ODE1} is ill-posed, we can still solve the well-posed problem \eqref{eq:up}, and then recover $\Bu$ using Theorem \ref{thm:recovery u}, as long as $\lambda_n(H)<\infty$! 

	We also point out here that the discretization of the $p$-derivative in Eq. \eqref{eq:up}  
	can be achieved not only by using Fourier spectral methods but also through other numerical schemes, such as finite difference or finite element methods.
	In such cases one can just solve the problem in $p>0$ and, if $\lambda_i(H)>0$ for some $i$, one needs to supply boundary condition at $p=0$ for the  characteristic fields corresponding to positive eigenvalues of $H$.  This boundary data can be set arbitrarily, since the induced spurious wave propagating  with speed $\lambda_i(H)>0$ will not be used when  we recover $u$ using data from $p>p^\diamond$, as laid out  by Theorem \ref{thm:recovery u}.


\section{The backward heat equation}\label{sec:The backward heat equation}

In this section, we consider the following one-dimensional backward heat equation in the infinite domain,
\begin{equation}\label{eq:backward heat Eq}
\begin{aligned}
	\partial_t u &= \partial_{xx} u   \;\, \quad \;\text{in}\;\Omega_x  \quad T>t\geq 0, \\
	u(T,x) &= u_T(x) \quad \;\text{in}\;\Omega_x ,
\end{aligned}
\end{equation}
where $\Omega_x=(-\infty,\infty)$. We want to determine $u(t,\cdot)$ for $T>t\geq 0$ from the data $u_T$.  
The change of variable $v(t,x) = u(T-t,x)$ leads to the following formulation of \eqref{eq:backward heat Eq}, 
\begin{equation}\label{eq:changed backward}
\begin{aligned}
	\partial_t v & = -\partial_{xx} v \;\, \quad \; &&\text{in}\; \Omega_x \quad 0\leq t<T,\\
	v(0,x) &= u_T(x)\quad \; &&\text{in} \; \Omega_x.
\end{aligned}
\end{equation}
It is easy to see that the eigenvalues of $H$ in \eqref{eq:ODE1} are positive after 
spatial discretization.
Let $\hat{g}(\eta)$ denote the Fourier transform of $g(x)\in L^2(\Omega_x)$ and define it by 
\begin{equation*}
\hat{g}(\eta) := \mathscr{F} (g)=\frac{1}{2\pi}\int_{\bbR} e^{ix\eta} g(x)\d x,  \qquad
g(x):=\mathscr{F}^{-1}(\hat g) = \int_{\bbR}
e^{-ix\eta} \hat g(\eta)\d \eta.
\end{equation*}
Let $\|g\|_{H^s(\Omega_x)}$ denote the Sobolev norm  $H^s(\Omega_x)$ defined as  
\begin{equation*}
\|g\|_{H^s(\Omega_x)}:=\big(\int_{\bbR}|\hat g(\eta)|^2(1+\eta^2)^s\;\d\eta\big)^{\frac 12}.
\end{equation*}
When $s=0$, $\|\cdot\|_{H^0(\Omega_x)}$ denotes the $L^2(\Omega_x)$-norm,  and $s$ can be noninteger \cite{evan16}.
For the above solution to be well-defined, we make the following assumptions of $u_T$ for analysis:
\begin{itemize}
	\item[\textbf{(H1)}] Assume ${u}_T\in H^s_0$, namely $H^s$ with compact support. 
	This yields a well-posed problem in a subspace of $L^2$  in the sense of Hadamard \cite{Miranker61}. 
	\item[\textbf{(H2)}]
	For more general $\hat{u}_T$, we will truncate the domain in $\eta$ and begin with the truncated--thus compactly supported--final data. 
\end{itemize}
We remark here that truncation is inherently achieved in our methodology (see $\textbf{(S1)}$ and $\textbf{(S2)}$), thus providing a form of {\it regularization} to the original ill-posed problem.

If the solution of \eqref{eq:backward heat Eq} in this Sobolev space exists, then it must be unique \cite{evan16}. We assume $u(t,x)$ is the unique solution of 
\eqref{eq:backward heat Eq}.
Applying the Fourier transform, one gets the exact solution $u(t,x)$ of problem \eqref{eq:backward heat Eq}:
\begin{equation}\label{eq:exact ut}
u(t,x) = \int_{\bbR} e^{-ix\eta} e^{\eta^2(T-t)}\hat{u}_T\;\d \eta.
\end{equation}
Denoting $u_0(x) = u(0,x)$,  then it is easy to see from \eqref{eq:exact ut} that 
\begin{equation}\label{assum:u0}
\|u_0\|_{H^s(\Omega_x)} = \big(  \int_{\bbR} |\hat u_0|^2 (1+\eta^2)^s\;d\eta\big)^{1/2}<\infty,\quad \text{for}\; s\geq 1.
\end{equation} 

The Schr\"odingerisation for \eqref{eq:backward heat Eq} gives
\begin{equation}
\begin{aligned}\label{w-heat}
	\frac{\D}{\D t} w &= - \partial_{xxp}w  \;\;\quad \quad \text{in}\; \Omega_x \times \Omega_p \quad T>t\geq 0, \\
	w(T,x,p) &= e^{-|p|}u_T(x)   \;\;\quad \text{in}\; \Omega_x \times \Omega_p,
\end{aligned}
\end{equation}
where $\Omega_p =(-\infty,\infty)$.
Again using the Fourier transform technique to \eqref{w-heat} with respect to the variables $p$ and $x$, one gets
the Fourier transform $\hat w(t,\eta,\xi)$ of the exact solution $w(t,x,p)$ of problem~\eqref{w-heat} satisfying
\begin{equation}\label{eq:Fourier transform w}
\frac{\D}{\D t} \hat w  = -i\xi\eta^2 \hat{w},\quad  \hat{w}(T,\eta,\xi) = \hat w_T =\frac{1}{2\pi}\int_{\bbR} \frac{e^{ix\eta} u_T(x)}{\pi(1+\xi^2)} \;\d x = \frac{\hat u_T(\eta)}{\pi (1+\xi^2)} .
\end{equation}
The solution of $w$ is then found to be 
\begin{equation}\label{eq:w}
w(t,x,p) = \int_{\bbR}\int_{\bbR} e^{-i (x\eta + p\xi)} e^{-i\xi\eta^2(t-T)}\hat w_T\;\d\eta \d \xi.
\end{equation}
Note the initial-value problem  \eqref{w-heat} is well-posed,  {\it even for $u_T\in H^s$ not being compactly supported in the Fourier space},  as can be easily seen from \eqref{eq:Fourier transform w}, which is an oscillatory ODE with purely imaginary spectra. Following the proof in \cite[Theorem 5, section 7.3]{evan16}, one has the regularity of $w(0,x,p)\in H^s(\Omega_x)\times H^1(\Omega_p)$ under the assumption in \textbf{(H1)} or \textbf{(H2)}.

Using the Fourier transform only on $x$ gives
\begin{equation}\label{w-tilde}
	\partial_t \hat{w}^x-\eta^2 \partial_p \hat{w}^x=0\quad  T>t\geq 0, \quad \hat{w}^x(T,\eta,p) = e^{-|p|}\hat{u}_T(\eta), 
\end{equation}
where $\hat{w}^x$ is the Fourier transform of $w$ on the $x$-variable, i.e. 
\begin{equation}\label{eq:hat wx l}
	\hat{w}^x(t,\eta,p) = \frac{1}{2\pi }\int_{\bbR} w(t,x,p) e^{i x\eta}\d x.
\end{equation}
Here $\hat{w}^x$ is a linear wave moving to the right, 
if one starts from $t=T$ and going backward in time (one can also use the change of variable $t \mapsto T-t$  to make the problem \eqref{w-heat} forward in time for notation comfort),
and the solution is computed by the inverse Fourier transformation 
\begin{equation}\label{eq:w2}
	w(t,x,p) = \int_{\bbR} e^{-|p+\eta^2(t-T)|} \hat{u}_T(\eta) e^{-ix\eta}\;\d \eta.
\end{equation}
This corresponds to the case of \eqref{eq:up} in which all eigenvalues of $H$ are {\it positive}. 
Therefore, to recover $u(0,x)$ from Eq.\eqref{eq:w2}, one needs to choose  $p> \eta^2 T$ for time $T>0$  to obtain 
\begin{equation*}
	u(0,x) = e^{p} \int_{\bbR} e^{-|p-\eta^2 T|}\hat{u}_Te^{-ix\eta} d\eta = \int_{\bbR} e^{\eta^2 T} \hat u_T e^{-ix\eta}\, d\eta,\quad  p>\eta^2 T.
\end{equation*}

This is the basis for the introduction of our stable computational method:   we solve the $w$-equation \eqref{w-heat} with a suitable--stable-- computational method, and then recover $u$ by either integration or pointwise evaluation using Theorem \ref{thm:recovery u}.

However, in the continuous space $\eta$ maybe unbounded, thus the condition $p>\eta^2 T$ may not be satisfied.  We consider two scenarios here:

\begin{enumerate}
\item[\textbf{(S1)}] The input data $\hat{u}_T$ has compact support, namely $\hat{u}_T=0$ for $|\eta|>\eta_{0}$ for some $\eta_{0}<\infty$.  While this is not generally true, when $u_T(x)$ is smooth in $x$, its Fourier mode decays rapidly, so one can truncate $\eta$ for $|\eta|>\eta_{0}$ with desired accuracy. 

\item[\textbf{(S2)}] One discretizes equation \eqref{w-heat} numerically. This usually requires first to truncate the $x$-domain so it is finite, followed by some numerical discretization of the $x-$derivative, for example the finite difference or spectral method.
Then $|\eta|\le \eta_{0}\leq  \mathscr{O}(1/(\Delta x)^2)$  where $\Delta x$ is the spatial mesh size in $x$.
\end{enumerate}

\subsection{Error estimates}

We start by   choosing an $\eta_{\max}>0$ and denote \begin{equation}\label{etamax}
\delta(\eta_{\max})=\big(\int_{\eta_{\max}}^{\infty} (1+\eta^2)^s |\hat{u}_0|^2\; \d\eta \big )^{\frac 12}
\end{equation}
to be a small quantity, which will be chosen with other error terms to meet the numerical tolerance requirement  (see Remark \ref{remark:recovery u}). We define the ``approximate" solution $u_{\eta_{\max}}$ of \eqref{eq:backward heat Eq} as follows:

\begin{definition}
First, define
\begin{equation}\label{eq:truncated w}
	w_{\eta_{\max}} =
	\int_{\bbR}e^{-|p+\eta^2(t-T)|}\hat u_T(\eta)\chi_{\max}e^{-ix\eta}\;\d \eta, 
\end{equation}
where $\chi_{\max}(\eta)= 1$ when $\eta \in [-\eta_{\max},\eta_{\max}]$, otherwise $\chi(\eta) = 0$. 
Then one obtains the approximate solution $u_{\eta_{\max}}$ by
\begin{equation}\label{eq:recover u}
	u_{\eta_{\max}}(t,x,p) = e^{p} w_{\eta_{\max}}(t,x,p) , \quad \text{or}\quad 
	u_{\eta_{\max}}(t,x,p)=e^{p} \int_{p}^{\infty} w(t,x,q) \;\d q,
\end{equation}
with $p\geq p^{\Diamond}=\eta_{\max}^2T$.
\end{definition}


With $u$ defined as above, we have  the following theorem which gives an error estimate to the above approximation. 

\begin{theorem}\label{thm:recovery of u}
	 Suppose $u_T \in H^s(\Omega_x)$ satisfies the assumption in \textbf{(H1)} or \textbf{(H2)}.
	Let $u(t)=u(t,x)$ be the solution \eqref{eq:backward heat Eq}  given by \eqref{eq:exact ut},
	and  $u_{\eta_{\max}}(t)=u_{\eta_{\max}}(t,x,p)$ given by \eqref{eq:recover u} with $p>p^{\Diamond} = \eta_{\max}^2T$ and $\eta_{\max}$ chosen to satisfy \eqref{etamax}.
	Then for any $p>p^{\Diamond}$, $t\in [0,T)$,
	\begin{equation}\label{u-approx}
		\|u(t)-u_{\eta_{\max}}(t)\|_{L^2(\Omega_x)} \le e^{-\eta_{\max}^2 t}\delta(\eta_{\max})/\eta_{\max}^s.
	\end{equation} 
\end{theorem}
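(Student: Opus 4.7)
The plan is to derive closed-form Fourier representations of both $u(t)$ and $u_{\eta_{\max}}(t,\cdot,p)$, subtract them, and reduce the required bound to an elementary $H^s$-tail estimate via Plancherel. Both sides are essentially inverse Fourier integrals already, so the core task is to show that, for $p\geq p^{\Diamond}=\eta_{\max}^2T$, the factor $e^p$ in \eqref{eq:recover u} exactly cancels the exponential decay in \eqref{eq:truncated w} on the band $|\eta|\le\eta_{\max}$, leaving only the truncated Fourier synthesis of $u(t)$.

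First I would substitute \eqref{eq:truncated w} into $u_{\eta_{\max}}=e^p w_{\eta_{\max}}$ and analyze the sign of $p+\eta^2(t-T)=p-\eta^2(T-t)$ on the truncation band. For $|\eta|\le\eta_{\max}$ and $t\in[0,T)$ one has $\eta^2(T-t)\le\eta_{\max}^2T\le p$, so this quantity is nonnegative and the absolute value can be dropped. The exponent $p-|p+\eta^2(t-T)|$ then collapses to $\eta^2(T-t)$, yielding the $p$-independent identity
\[
 u_{\eta_{\max}}(t,x,p)=\int_{-\eta_{\max}}^{\eta_{\max}}e^{-ix\eta}\,e^{\eta^2(T-t)}\,\hat u_T(\eta)\,\d\eta.
\]
Subtracting from \eqref{eq:exact ut} leaves only the $|\eta|>\eta_{\max}$ portion of the same integrand. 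Using the relation $\hat u_0(\eta)=e^{\eta^2T}\hat u_T(\eta)$, obtained by setting $t=0$ in \eqref{eq:exact ut}, converts the integrand into $e^{-\eta^2 t}\hat u_0(\eta)e^{-ix\eta}$, and Plancherel's identity (under the paper's convention in which $\|\cdot\|_{L^2(\Omega_x)}$ agrees with the $L^2$-norm of the Fourier transform) gives
\[
 \|u(t)-u_{\eta_{\max}}(t)\|_{L^2(\Omega_x)}^2=\int_{|\eta|>\eta_{\max}}e^{-2\eta^2 t}\,|\hat u_0(\eta)|^2\,\d\eta.
\]
Bounding $e^{-2\eta^2 t}\le e^{-2\eta_{\max}^2 t}$ pointwise and inserting the factor $1\le(1+\eta^2)^s/\eta_{\max}^{2s}$ valid on the tail reintroduces the $H^s$ weight; the remaining integral is then at most $\delta^2$ by \eqref{etamax}, and taking square roots delivers \eqref{u-approx}.

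The only delicate point is the sign analysis above: one must verify that the threshold $p^{\Diamond}=\eta_{\max}^2T$ is just large enough to keep $p+\eta^2(t-T)\ge 0$ uniformly over $|\eta|\le\eta_{\max}$ and $t\in[0,T)$, so that the kink in $|\cdot|$ never enters the integrand and the exponential blow-up $e^p$ is absorbed exactly against $e^{-|p+\eta^2(t-T)|}$. Everything else is Plancherel and a routine $H^s$ tail bound guaranteed by \eqref{assum:u0}. A parallel computation using the same sign analysis (replacing the pointwise cancellation by an explicit $q$-integration of $e^{-(q-\eta^2(T-t))}$ on $[p,\infty)$) shows that the integral form of $u_{\eta_{\max}}$ in \eqref{eq:recover u} produces the same closed expression for its $|\eta|\le\eta_{\max}$ contribution, so the same bound applies to both recovery formulas.
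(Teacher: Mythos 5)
Your proof is correct and follows essentially the same route as the paper's: rewrite both $u$ and $u_{\eta_{\max}}$ in Fourier form, use Parseval, substitute $\hat u_T=e^{-\eta^2 T}\hat u_0$, pull out $e^{-\eta_{\max}^2 t}$, and insert the $(1+\eta^2)^s/\eta_{\max}^{2s}$ weight to reach $\delta/\eta_{\max}^s$. The one place you add value is making explicit the sign analysis showing that $p\ge p^{\Diamond}=\eta_{\max}^2T$ forces $p+\eta^2(t-T)\ge 0$ on the band $|\eta|\le\eta_{\max}$ so that $e^{p}e^{-|p+\eta^2(t-T)|}=e^{\eta^2(T-t)}$ (and the parallel $q$-integration for the integral recovery); the paper uses this identity implicitly when writing $\hat u_{\eta_{\max}}=e^{\eta^2(T-t)}\hat u_T\chi_{\max}$ without comment.
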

\begin{proof}
 If $u_T$ satisfies $\textbf{(H1)}$ or $\textbf{(H2)}$, it is apparent that 
		$\delta(\eta_{\max})$ for some $\eta_{\max}$ in 
		Eq.\eqref{etamax} is well defined. 
	According to Parseval's equality and  \eqref{eq:exact ut}, \eqref{etamax} -- \eqref{eq:recover u}, it follows that 
	\begin{align}
		\|u-u_{\eta_{\max}}\|_{L^2(\Omega_x)} &= \|\hat u-\hat u_{\eta_{\max}}  \|_{L^2(\Omega_\eta)}
		=\|e^{\eta^2(T-t)}\hat u_T-e^{\eta^2(T-t)}\hat u_T\chi_{\max}\|_{L^2(\Omega_\eta)}  \notag \\
		& =\left(\int_{|\eta|>\eta_{\max}} |e^{\eta^2(T-t)}e^{-\eta^2 T}\hat{u}_0|^2\;\d x\right)^{\frac 12}\notag \\
		& =\left(\int_{|\eta|>\eta_{\max}} |e^{-\eta^2 t} \hat{u}_0|^2\;\d x\right)^{\frac 12}\notag \\
		&= \left(\int_{|\eta|>\eta_{\max}} \frac{ e^{-\eta^2 t}|\hat u_0|^2}{(1+\eta^2)^{s}} (1+\eta^2)^s\;\d \eta\right)^{\frac 12} \notag \\
		&\le \left(\int_{|\eta|>\eta_{\max}} \frac{ e^{-\eta_{\max}^2 t}|\hat u_0|^2}{\eta^{2s}}  (1+\eta^2)^s\;\d \eta\right)^{\frac 12}
		\leq \frac{e^{-\eta_{\max}^2 t}\delta(\eta_{\max})}{\eta_{\max}^s}. \label{eq:u-ueta}
	\end{align}
	The proof is completed.
\end{proof}

\begin{remark} Since we are solving the problem \eqref{eq:backward heat Eq} {\it backward in time}, the error estimate obtained in Theorem \ref{thm:recovery of u} needs to be understood also backward in time, namely the error will {\it increase} as $t$ goes from $T$ to $0$. Numerical experiments in section \ref{sec:Approximation of a piece-wise smooth solution} also confirms this. 
\end{remark}

\begin{remark}\label{remark:recovery u}
	In order to bound the error by $\varepsilon$  at $t=0$,
	the shape estimate of $\eta_{\max}$ from \eqref{eq:u-ueta}
	is that $\eta_{\max}$ is chosen such that 
	\begin{equation}\label{eq:eta_max_choice0}
		\left(\int_{|\eta|>\eta_{\max}} |\hat{u}_0|^2\;\d x\right)^{\frac 12} \leq \varepsilon.
	\end{equation}
	If $u_0\in H^s(\Omega_x)$, one could choose $\eta_{\max}$  satisfying 
	\begin{equation}\label{eq:eta_max_choice}
		\eta_{\max}\geq \left(\frac{\delta}{\varepsilon}\right)^{\frac{1}{s}}.
	\end{equation}
	Considering  $u_0\in H^s(\Omega_x)$, one may have
	$|\hat{u}_0|^2(1+\eta^2)^s = \mathscr{O}(\frac{1}{\eta^{1+\epsilon_0}})$ with $0< \epsilon_0\leq 2$,
	and then $\delta(\eta_{\max}) = \mathscr{O}(1/(\eta_{\max}^{\epsilon_0/2}))$ is small.
	Besides, if $u_0$ is smooth enough, $p^{\Diamond} = (\frac{\delta}{\varepsilon})^{\frac{2}{s}} T$ is not necessarily large. 
	
	Specifically, if the Fourier transform of $u$ has compact support such that $\hat{u}_T(\eta) = 0$ when $\eta>\eta_{0}$, it yields from \eqref{eq:u-ueta}-\eqref{eq:eta_max_choice0} that there exits no error of recovery by choosing $p^{\Diamond} = \eta_{0}^2T$.  Take an example, $u = \alpha(t)\cos(\omega_0 x)$ (or $u = \alpha(t)\sin(\omega_0 x)$) is a periodic function in $[\frac{-\pi}{\omega_0},\frac{\pi}{\omega_0}]$. 
	We extend it periodically over the whole field of real domain $\bbR$. The Fourier transform of $u$ is $\hat{u}(t,\eta) = \alpha(t) \pi\big(\delta(\eta-\omega_0) + \delta(\eta+\omega_0)\big)$,
	where $\delta(\eta+\omega_0)$ gives an impulse that is shifted to the left by $\omega_0$, likewise the function $\delta(\eta-\omega_0)$ yields an impulse shifted to the right by $\omega_0$. In this case, $\hat{u}(t,\eta)\equiv 0$ when $|\eta|>\omega_0=\eta_0$.
	Therefore, the best choice of $p^{\Diamond}$ for 
	$u=\alpha(t)\cos(\omega_0 x)$ (or $u = \alpha(t)\sin(\omega_0 x)$) is 
	\begin{equation}\label{eq:choice of pDiamond}
		p^{\Diamond} = \omega_0^2 T = \eta_0^2 T.
	\end{equation}  
	At this point,  $\eta_{\max}=\omega_0=\eta_0$,  one has $\delta(\eta_{\max}) = 0$,  $u=u_{\eta_{\max}}$ for $p> p^{\Diamond}$.
\end{remark}

In practice, the data at $t=T$  is usually imprecise since it depends on the reading of physical 
measurements. Consider a perturbed data  $u_T^{\zeta}$, 
which is a small disturb of $u_T$.  The  Fourier transform of $u_T^\zeta$ may  not decay as $|\eta|\to \infty$, which leads to the severely ill-posed problems of \eqref{eq:backward heat Eq}. 
However, after Schr\"odingerisation, a disturb of the Fourier transform of $w$ defined by $\hat w_T^{\zeta} = \frac{\hat u_T^{\zeta}(\eta)}{\pi (1+\xi^2)}$ does not affect the well-posedness of $w$ even if $w_T^{\zeta}$ is merely in $L^2(\bbR)$ .
Assume the measured data $u_T^{\zeta}$ satisfies 
\begin{equation}
	\|u_T^{\zeta} - u_T\|_{L^2(\Omega_x)}\leq \zeta_0. 
\end{equation} 
Define the approximate solution at time $t$ from $u_T^{\zeta}$ by
\begin{equation}\label{eq:w_delta}
	\begin{aligned}
		w_{\eta_{\max},\zeta} &=  \int_{\bbR}\int_{\bbR} e^{-i (x\eta + p\xi)} e^{-i\xi\eta^2(t-T)}\hat w_T^{\zeta}\chi_{\max}\;\d\eta \d \xi\\
		&=\int_{\bbR} e^{-|p+\eta^2(t-T)|} \hat{u}_T^{\zeta}(\eta) e^{-ix\eta} \chi_{\max}\;\d \eta.
	\end{aligned}
\end{equation}
Now  we have the approximate solution $w_{\eta_{\max},\zeta}$.

\begin{theorem}\label{thm:recover of u with input noise}
	 Suppose $u_T \in H^s(\Omega_x)$ satisfies the assumption in \textbf{(H1)} or \textbf{(H2)}.
	Let $u(t,x)$ be given by \eqref{eq:backward heat Eq} and $ w_{\eta_{\max},\zeta}(t,x,p)$ given by \eqref{eq:w_delta}, respectively.
	If $u_{\eta_{\max},\zeta}$ is recovered by 
	\begin{equation}\label{eq:recover u_delta}
		u_{\eta_{\max},\zeta} = e^{p} w_{\eta_{\max},\zeta}(t,x,p),  \quad \text{or}\quad  u_{\eta_{\max},\zeta} =e^{p} \int_{p}^{\infty} w_{\eta_{\max},\zeta}(t,x,q) \;\d q, 
	\end{equation}
	for any $p>p^{\Diamond} = \eta_{\max}^2 T$, and $\eta_{\max}$ is chosen such that 
	$ \frac{\delta(\eta_{\max})}{|\eta_{\max}|^s}
	+ e^{\eta_{\max}^2 T}\zeta_0 \leq \varepsilon$,
	then 
	\begin{equation*}
		\|u(t)-u_{\eta_{\max},\zeta}(t)\|_{L^2(\Omega_x)} 
		\leq \varepsilon.
	\end{equation*} 
\end{theorem}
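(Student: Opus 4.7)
The plan is to apply the triangle inequality with the noiseless truncated approximation $u_{\eta_{\max}}$ as an intermediate, splitting the total error into a truncation part and a data-perturbation part:
\begin{equation*}
\|u(t) - u_{\eta_{\max},\delta}(t)\|_{L^2(\Omega_x)} \le \|u(t) - u_{\eta_{\max}}(t)\|_{L^2(\Omega_x)} + \|u_{\eta_{\max}}(t) - u_{\eta_{\max},\delta}(t)\|_{L^2(\Omega_x)}.
\end{equation*}
The first term is bounded directly by Theorem~\ref{thm:recovery of u}, which gives $e^{-\eta_{\max}^2 t}\delta(\eta_{\max})/\eta_{\max}^s \le \delta(\eta_{\max})/\eta_{\max}^s$ for all $t\in[0,T)$, since $e^{-\eta_{\max}^2 t}\le 1$. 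So the only real work is to estimate the second, data-perturbation term.

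For the second term I would first simplify $u_{\eta_{\max}}$ and $u_{\eta_{\max},\delta}$ on the region $p>p^{\Diamond}=\eta_{\max}^2 T$. On the support of $\chi_{\max}$ and for $t\in[0,T]$, one has $p+\eta^2(t-T)\ge p-\eta_{\max}^2 T > 0$, so $|p+\eta^2(t-T)| = p+\eta^2(t-T)$ and the exponential prefactor collapses, giving
\begin{equation*}
u_{\eta_{\max},\delta}(t,x) = \int_{|\eta|\le\eta_{\max}} e^{\eta^2(T-t)}\,\hat u_T^{\delta}(\eta)\,e^{-ix\eta}\,\D\eta,
\end{equation*}
and similarly for $u_{\eta_{\max}}$ with $\hat u_T$ in place of $\hat u_T^{\delta}$. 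Note this is independent of the particular $p>p^{\Diamond}$ chosen, and the analogous identity is obtained for the integral recovery formula by the same sign analysis on $p+\eta^2(t-T)$.

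Then by Parseval's identity applied on $[-\eta_{\max},\eta_{\max}]$,
\begin{align*}
\|u_{\eta_{\max}}(t) - u_{\eta_{\max},\delta}(t)\|_{L^2(\Omega_x)}^2
&= \int_{|\eta|\le\eta_{\max}} e^{2\eta^2(T-t)}\,|\hat u_T - \hat u_T^{\delta}|^2\,\D\eta \\
&\le e^{2\eta_{\max}^2(T-t)}\,\|u_T - u_T^{\delta}\|_{L^2(\Omega_x)}^2 \le e^{2\eta_{\max}^2 T}\,\delta_0^2.
\end{align*}
Taking square roots and combining with the Theorem~\ref{thm:recovery of u} bound yields
\begin{equation*}
\|u(t) - u_{\eta_{\max},\delta}(t)\|_{L^2(\Omega_x)} \le \frac{\delta(\eta_{\max})}{\eta_{\max}^s} + e^{\eta_{\max}^2 T}\delta_0 \le \varepsilon
\end{equation*}
by the hypothesis on $\eta_{\max}$, completing the proof.

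The only conceptually nontrivial point is the second term: the Fourier multiplier $e^{\eta^2(T-t)}$ is exactly the symbol of the backward heat propagator and hence unbounded on $L^2(\Omega_x)$ without truncation, which is precisely the source of the Hadamard instability. The truncation by $\chi_{\max}$ tames this multiplier into the constant factor $e^{\eta_{\max}^2 T}$, so everything boils down to verifying that the Schr\"odingerized representation collapses to this multiplier on the recovery region $p>p^{\Diamond}$; the remaining inequalities are bookkeeping. The interplay between $\eta_{\max}$ (which must be large to make $\delta(\eta_{\max})/\eta_{\max}^s$ small) and $\delta_0$ (which must be small enough that $e^{\eta_{\max}^2 T}\delta_0$ stays under control) encodes the standard logarithmic convergence trade-off for the backward heat equation.
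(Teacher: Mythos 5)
Your proof is correct and follows essentially the same route as the paper's: split via the triangle inequality against the noiseless truncated solution $u_{\eta_{\max}}$, bound the truncation part by Theorem~\ref{thm:recovery of u} (the paper's \eqref{eq:u-ueta}), and bound the data-perturbation part by Parseval together with the fact that on $p>p^{\Diamond}$ the Schr\"odingerized representation collapses to the multiplier $e^{\eta^2(T-t)}$ on $\{|\eta|\le\eta_{\max}\}$. The only difference is that you spell out the sign check $p+\eta^2(t-T)>0$ on the recovery region (which the paper asserts implicitly), and the paper records the slightly sharper intermediate bound $e^{\eta_{\max}^2(T-t)}\delta_0$ before relaxing to $e^{\eta_{\max}^2 T}\delta_0$; neither difference is substantive.
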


\begin{proof}
	From the definition of $u_{\eta_{\max},\zeta}$ in \eqref{eq:recover u_delta}, one has
	\begin{equation*}
		u_{\eta_{\max},\zeta} = \int_{\bbR} e^{-ix\eta} e^{\eta^2(T-t)}\hat u_T^{\zeta}(\eta)\chi_{\max}\;\d \eta,
	\end{equation*}
	for $p>p^{\Diamond}$.
	Then one has 
	\begin{equation}\label{eq:u_delta_etamax}
		\|u_{\eta_{\max},\zeta} - u_{\eta,\max}\|_{L^2(\Omega_x)}
		=\big(\int_{|\eta|\leq \eta_{\max}} |e^{\eta^2(T-t)} (\hat u_T^{\zeta} - \hat u_T)|^2\;\d\eta\big)^{\frac 12} \leq e^{\eta_{\max}^2(T-t)} \zeta_0.
	\end{equation}
	The proof is finished by combining \eqref{eq:u-ueta} and  \eqref{eq:u_delta_etamax}.
\end{proof}

\subsection{Variable coefficient problems}
	When the system has a spatially varying coefficient, the Fourier transform cannot be applied, and the analysis in Theorem~\ref{thm:recovery of u} no longer holds. However, the Schr\"odingerisation still works.

We consider variable coefficient equation 
\begin{equation}\label{eq:vari coef BHEQ} 
	\partial_t u=\partial_x(a(x)\partial_x u) \quad x\in \Omega_x, \quad  0<t<T, \quad  u(T, x)=u_T(x),
\end{equation}
with Dirichlet's boundary condition of bounded domain $\Omega_x$ in straightforward way,
where $a(x)\in C^{\infty}(\bar\Omega_x)$ and $a(x)\ge \alpha_0 >0$.

Let $\phi_k$ be the $k$-th eigenfunction corresponding to $\lambda_k$ of the operator $\mathcal{L} := -\partial_x( a(x)\partial_x \cdot )$ such that 
\begin{equation}\label{eq:eigenfun of L}
	\mathcal{L} \phi_k = \lambda_k \phi_k, \quad \phi_k|_{\partial \Omega_x} =0.
\end{equation}
It is easy to see that $\phi_k \in C^{\infty}(\Omega_x)$ from the regularity of the second-order elliptic equations \cite[Theorem 3, section~6.3]{evan16}.
They form an orthonormal basis set of $L^2(\Omega_x)$. According to standard theory on the eigenvalues of symmetric elliptic operators \cite[Theorem 1, section 6.5]{evan16}, one has 
\begin{equation}
	0<\lambda_1\leq \lambda_2\leq \lambda_3\leq \cdots, \quad 
	\lambda_k\to \infty \quad \text{as}\; k\to \infty.
\end{equation}
We search for solution to \eqref{eq:vari coef BHEQ} at time $t$ from the data $u_T$ in the form 
\begin{equation}\label{eq:u expansion of eigenFun}
	u(t,x) = \sum_{k\in \bbN^+} \alpha_k(t)\phi_k(x) = \sum_{k\in \bbN^+} e^{\lambda_k (T-t)} \alpha_k(T)\phi_k(x) \quad T>t\geq 0,
\end{equation}
where  $\alpha_k(T) = \int_{\Omega_x} u_T\phi_k\;\d x$. 

Define the equivalent norm of $\|\cdot\|_{H^s}$, $s\in \bbN$, again denoted by $\|\cdot\|_{H^s}$:
\begin{equation}\label{eq:Hs vare coef u}
	\|v\|_{H^s(\Omega_x)}^2 = \sum_{k\in \bbN^+} |\beta_k|^2(1+\lambda_k + \cdots \lambda_k^s),
\end{equation} 
where $\beta_k = \int_{\Omega_x} v\phi_k\;\d x$.
Here $\|\cdot\|_{H^0(\Omega_x)}:=\|\cdot \|_{L^2(\Omega_x)}$ denotes  $L^2(\Omega_x)$ norm when $s=0$,
and $s$ can also be a noninteger for $1\leq s\leq 2$ \cite[Proposition 2.1]{KR97}.
For  the solution to \eqref{eq:vari coef BHEQ} to be well-defined, we assume 
the expansion of $u_T\in H^s(\Omega_x)$ to the orthonormal basis has only finite number $n_{\max}$ of terms, leading to $u(t,x) \in H^s(\Omega_x)$ for all $0\le t\le T$,  i.e.
\begin{equation}\label{eq:regularity of u0}
	\begin{aligned}
		\|u(t, 
		\cdot)\|_{H^s(\Omega_x)}^2 &= \sum_{k\le n_{\max}} |\alpha_k(t)|^2(1+\lambda_k+\cdots\lambda_k^s)  \\
		&= \sum_{k\le n_{\max}} |e^{\lambda_k (T-t)}\alpha_k(T)|^2(1+\lambda_k+\cdots\lambda_k^s)< \infty.
	\end{aligned}
\end{equation}
For more general cases, we  truncate to the finite terms of expansions of the input data. This regularizes the originally ill-posed problem.  
Next, assume $w(t,x,p)$ is in the form of 
\begin{equation*}
	w(t,x,p) = \sum_{k\in \bbN^+} w_k(t,p)\phi_k(x).
\end{equation*}
We correspondingly project the input data $w(T,x,p)$ on the same basis set and Eq.~\eqref{w-heat}
is equivalent to 
\begin{equation*}
	\frac{\D}{\D t } w_k(t,p) =-\lambda_k \partial_p w_k(t,p),\quad 
	w_k(T,p) = e^{-|p|} \alpha_k(T).
\end{equation*}
Note $w$ is well-defined even if the series is not truncated. We then define the truncated approximate solution by
\begin{equation}\label{eq:wnmax}
	w_{n_{\max}} =\sum_{1\leq k\leq n_{\max}} e^{-|p-\lambda_k (T-t)|} \alpha_k(T) \phi_k(x).
\end{equation}
Let
\begin{equation}
	\delta(n_{\max}) = \left(\sum_{k=n_{\max}}^{\infty} |\alpha_k(0)|^2\lambda_k^s\right )^{1/2},\quad 
\end{equation}
we choose $n_{\max}$ such that
\begin{equation}\label{eq:nmax}
	\delta(n_{\max})/\lambda_{n_{\max}}^{\frac{s}{2}} \leq \varepsilon.
\end{equation}
for the desired precision $\varepsilon$.
We remark here $n_{\max}$ exists 
from \eqref{eq:regularity of u0} if $u_T\in H^s(\Omega_x)$.

\begin{theorem}
	Assume $u_T\in H^s(\Omega_x)$ with only finite number terms of expansion  to the orthonomal basis, or if not, truncated to be so. 
	Let $u(t,\cdot)$ be the solution to \eqref{eq:vari coef BHEQ} given by \eqref{eq:u expansion of eigenFun} 
	and $w_{n_{\max}}(t,x,p)$ be given by \eqref{eq:wnmax},
	respectively. 
	If $u_{n_{\max}}$ is recovered by 
	\begin{equation}
		u_{n_{\max}} = e^p w_{n_{\max}}\quad  \text{or}\quad 
		u_{n_{\max}} = e^p\int_p^{\infty} w_{n_{\max}}(t,x,q)\;\d q,
	\end{equation}
	where $p>p^{\Diamond} = \lambda_{n_{\max}}T$, and $n_{\max}$ is chosen by \eqref{eq:nmax}. Then 
	\begin{equation}
		\|u(t,\cdot) - u_{n_{\max}}(t,\cdot)\|_{L^2(\Omega_x)}\leq e^{-\lambda_{n_{\max}}t}\delta(n_{\max})/\lambda_{n_{\max}}^{\frac{s}{2}}\leq  \varepsilon.
	\end{equation}
\end{theorem}

The proof is similar to Theorem~\ref{thm:recovery of u} by using the spectral theory of the symmetric elliptic operators and the norm defined in \eqref{eq:Hs vare coef u}. We omit it here.

\section{Discretization of the Schr\"odingerised equation} \label{sec:discretisation}

In this section, we consider the discretization of \eqref{w-heat} and the corresponding error estimates. 
For simplicity, we consider $u$ to be a periodic function defined in $\Omega_x=[-\pi ,\pi ]$.
\subsection{The numerical discretization}\label{sec:quantum algorithms for the backward Eq}
We discretize the $x$ domain uniformly  with the  mesh size $\triangle x = 2\pi /M$, where $M$ is a positive even integer and the grid points are denoted by $-\pi =x_0<\cdots<x_{M}=\pi$.
The $1$-D basis functions for the Fourier spectral method are usually chosen as
\begin{equation} \label{eq:phi nux}
	\phi_j(x) = e^{i\mu_j (x+\pi )},\quad \mu_j =  (j-M/2),\quad j\in [M]. 
\end{equation}
Considering the Fourier spectral discretization on $x$, one easily gets
\begin{equation}\label{eq:tilde uh}
	\frac{\D}{\D t} \Bu_h = - \Phi A (\Phi)^{-1}\Bu_h,\quad 
	\Phi = (\phi_{ij})_{M\times M} = (\phi_j(x_i))_{M\times M},
\end{equation}
where $\Bu_h(T)= [u_T(x_0),u_T(x_1),\cdots,u_T(x_{M-1})]^{\top}$ and $A=D_x^2$. 
The matrix $D_x$ is obtained by $D_{x} = \text{diag}\{\mu_0,\cdots,\mu_{M-1}\}$. By applying matrix exponentials, one has
\begin{equation}\label{eq:tilde uh At}
	\Bu_h(t) = \Phi e^{A(T-t)} (\Phi)^{-1}\Bu_h(T).
\end{equation}
However,  it is difficult to obtain the numerical solution $ \Bu_h$ in the classical computer from \eqref{eq:tilde uh At} since the problem is unstable.  

We now introduce the discretization of $p$ domain as in \cite{JLM24}.	
First truncating  the $p$-region to $[-\pi L,\pi L]$, where   $\pi L$ is large enough such that 
	\begin{equation}\label{eq:require pi L}
		e^{-(\pi L -4.5\eta_{\max}^2 T)} \|u_T\|_{L^2(\Omega_x)} \leq \mathscr{O}(\varepsilon),
	\end{equation}
	with $\varepsilon$ error bound.
	We point out that the constant $4.5$ is just for numerical analysis which is quite mild for numerical tests, and it is sufficient to take it as approximately $1$.  

Then, we can treat $w$ at the boundary  
as $w(t,x,\pi L)\approx w(t,x,-\pi L)\approx 0$. 
Using the spectral method, one gets the transformation and difference matrix
\begin{equation*}
	(\Phi_p)_{ij} = (e^{i\mu^p_i(p_j+\pi L)})\in \bbC^{N\times N}, \quad 
	D_p = \text{diag} \{\mu^p_0,\mu^p_1,\dots,\mu^p_{N-1}\},
	\quad \mu^p_i = (i-N/2)/L,
\end{equation*}
where $p_j = -\pi L +j\triangle p$ with $\triangle p = \frac{2\pi L}{N}$.
Applying the discrete Fourier spectral discretization on $p$ and $x$, i.e.
$\Phi_{px} = \Phi^p \otimes \Phi$, 
it yields 
\begin{equation}\label{eq:tilde w}
	\frac{\D}{\D t} \Bw_h = i \Phi_{px}  (D_p \otimes A)\Phi_{px}^{-1} \Bw_h, \quad 
	\Bw_h (T) = 
	\Bg_h \otimes \Bu_h(T),
\end{equation}
where $\Bg_h =[e^{-|p_0|},e^{-|p_1|},\cdots,e^{-|p_{N-1}|}]^{\top}$. 
	The matrices $\Phi^{-1}$ ($\Phi$) or $\Phi_p^{-1}$ ($\Phi_p$) can be implemented by (inverse) fast  Fourier transforms (FFT).  Since $D_p\otimes A$ is  a diagonal matrix, the solution at time $t$ is computed by 
	\begin{equation}\label{eq:unitary wht}
		\Bw_h(t) = \Phi_{px}\Cu(T-t)\Phi_{px}^{-1} \Bw_h(T), \quad \Cu(T-t) = \exp(i D_p\otimes A(T-t)).
	\end{equation}
	This approach ensures that no error is introduced in the time discretization. 
	The numerical solution to the system \eqref{eq:backward heat Eq} is 
	\begin{equation}
		\Bu_h = e^{p_{k_0}}((\Be_{k_0}^{(N)})^{\top}\otimes I_M) 
		\Bw_h,
	\end{equation}
	where $k_0=\min \{k:p_k> p^{\Diamond}=\eta_{\max}^2 T\}$ from Theorem~\ref{thm:recovery of u}, or can be recovered by second order numerical integration
	\begin{equation}
		\Bu_h  = \frac{\sum_{j\in \Cj} \triangle p ((\Be_{j}^{(N)})^{\top}\otimes I_M) 
			\Bw_h}{e^{-(p_{j_0}-0.5\triangle p)}-e^{-(\pi L-0.5\triangle p)}},  
	\end{equation}
	with $j_0 = \min \Cj = \min \{j:p_j-0.5\triangle p> p^{\Diamond}=\eta_{\max}^2 T\}$.

\subsection{Error analysis for the spatial discretization}
Following the general error estimates of  Schr\"odingerisation in the extended domain in \cite{JLM24}, 
we derive  the specific estimates for the above approximation to the backward heat equation under the assumption that $u_T\in H^s(\Omega_x)$ satisfies \textbf{(H1)} or \textbf{(H2)}.

Define the complex $(N+1)$ and $(M+1)$-dimensional space with respect to $p$ and $x$, respectively
\begin{equation}
	X_N^p = \text{span}\{e^{ik(p/L)}:-\frac{N}{2}\leq k\leq \frac{N}{2}\},\quad 
	X_M^x = \text{span}\{e^{ilx}:-\frac{M}{2}\leq l\leq \frac{M}{2}\}.
\end{equation}
The approximation of $w$ in the finite space $X_N^p\times X_M^x$ from the numerical solution in Eq. \eqref{eq:tilde w} is 
\begin{equation}\label{eq:whd(t,x,p)}
	\begin{aligned}
		w_h^d(t,x,p) =  \sum_{|k|\leq \frac{N}{2}} \sum_{|l|\leq \frac{M}{2}}\tilde w_{k,l} e^{ik(\frac{p}{L}+\pi)}e^{il(x+\pi)}, \quad
		\tilde w_{k,l} = (\Be_{j_k}^{(N)}\otimes \Be_{j_l}^{(M)})^{\top} \Phi_{px}^{-1}\Bw_h(t),
	\end{aligned}
\end{equation}
where  $j_k = -k+N/2$, $j_l = -l+M/2$. Correspondingly, the approximation of $u$ is defined by 
\begin{equation}\label{eq:uhd}
	u_h^d(t,x,p) = e^{p} w_h^d(t,x,p),\quad \text{or} \quad 
	u_d^*(t,x,p) = \frac{1}{e^{-p}-e^{-\pi L}} \int_{p}^{\pi L} w_h^d(t,x,q)\;\d q,
\end{equation}
where $p>p^{\Diamond}=\eta_{\max}^2 T$.

	It is apparent that the discretization serves as an approximation for the following system with periodic boundary conditions:
	\begin{equation}\label{w-heat periodic}
		\begin{cases}
			\frac{\D}{\D t} \Cw  = -\partial_{xxp} \Cw\quad \quad \;  \text{in}\; \Omega_x\times \Omega_p \quad T> t \geq 0,\\
			\Cw(t,x,-\pi L) =  \Cw(t,x,\pi L), \quad 
			\Cw(t,-\pi,p) =  \Cw(t,\pi,p), \\
			\Cw(T,x,p)  = \Cg(p) u_T(x),
		\end{cases}
	\end{equation}
	where $\Omega_p = (-\pi L,\pi L)$, and $\Cg$ is periodic with a period of $2\pi L$, such that 
	\begin{equation}\label{def:Cg}
		\Cg (p)= e^{-|p-2m\pi L|}\quad \quad (2m-1)\pi L \leq p <(2m+1)\pi L, \quad m\in \bbZ.
	\end{equation}
	Since $\Cw$ is also periodic  with respective to $x$, 
	the Fourier transform on variable $x$ of $\Cw$ is defined by
	\begin{align}
		\hat{\Cw}^x(t,\eta,p)  =  \frac{1}{2\pi} \int_{\bbR} e^{ix \eta} \Cw(t,x,p)\;\d x
		\label{eq:hat Cwx l}.
	\end{align}
	According to the fundamental results on regularity of transport equations, it is well known that the initial value problem \eqref{w-heat periodic} is well posed, and 
	$\Cw(0,x,p)\in H^s(\Omega_x)\times H^1(\Omega_p)$ satisfies 
	\cite[Theorem 5, section 7.3]{evan16}
	\begin{equation}
		\|\Cw(0,x,p)\|_{H^s(\Omega_x)\times H^1(\Omega_p)} \lesssim 
		\|\Cg\|_{H^1(\Omega_p)}\|u_0\|_{H^s(\Omega_x)} \lesssim \|w_0\|_{H^s(\Omega_x)\times H^1(\Omega_p)}.
	\end{equation} In addition,
	the standard estimate of spectral methods shows
	\begin{align}
		\|w_h^d - \Cw(0,x,p)\|_{L^2(\Omega_p)\times L^2(\Omega_x)} &\lesssim (\triangle p+\triangle x^s) \|\Cw(0,x,p)\|_{H^s(\Omega_x)\times H^1(\Omega_p)} \notag \\
		&\lesssim (\triangle p+\triangle x^s)\|w_0\|_{H^s(\Omega_x)\times H^1(\Omega_p)}. \label{eq:err spectral}
	\end{align}
	\begin{lemma}\label{lem:err Cew BD}
		Suppose $\eta_{\max}$ satisfies \eqref{eq:eta_max_choice},
		$\pi L>3\eta_{\max}^2T$, and $u_T\in H^s(\Omega_x)$ satisfies \textbf{(H1)} or \textbf{(H2)}.
		Let $\Ce_w(t,x,p) = \Cw(t,x,p) - w(t,x,p)$. It follows that 
		\begin{equation*}
			\begin{aligned}
				\int_0^T |\Ce_w(t,\cdot,-\pi L)|_{H^1(\Omega_x)}^2 \d t
				\leq  \frac{5}{2}e^{-2(\pi L-3\eta_{\max}^2T)}\|u_T\|^2_{L^2(\Omega_x)} + e^{-3\eta_{\max}^2 T}\varepsilon^2.
			\end{aligned}
		\end{equation*}
	\end{lemma}
	\begin{proof}
		It is easy to obtain  from the system in Eq.\eqref{eq:w}
		that
		\begin{align}
			\hat{w}^x(t,\eta,p) = e^{-|p+\eta^2(t-T)|}\hat u_T(\eta) = e^{-|p+\eta^2(t-T)|-\eta^2T}\hat{u}_0(\eta). \label{eq:hat wl}
		\end{align}
		Similarly, one has
		\begin{equation}
			\hat{\Cw}^x(t,\eta,p) = \Cg(p+\eta^2(t-T))\hat u_T(\eta)=\Cg(p+\eta^2(t-T))e^{-\eta^2T}\hat{u}_0(\eta).\label{eq:hat Cwl}
		\end{equation}
		Using the semi $H^1$ norm in phase space, it yields 
		\begin{equation*}
			|\Ce_w(t,\cdot, -\pi L)|_{H^1(\Omega_x)}^2 = \int_{\bbR} \eta^2|\hat{w}^x(t,\eta, -\pi L)
			-\hat{\Cw}^x(t, \eta,-\pi L)|^2 \;\d \eta.
		\end{equation*}
		According to \eqref{eq:hat wl} and \eqref{eq:hat Cwl},
		changing the order of integration and letting $q = -\pi L+\eta^2 (t-T)$,
		it yields
		\begin{align}\label{eq:err Cew}
			&\int_0^T 
			| \Ce_w(t,\cdot,-\pi L)|_{H^1(\Omega_x)}^2
			\; \d t  = I_1+I_2,
		\end{align}
		where $I_1$ and $I_2$ are computed by 
		\begin{align*}
			I_1= & \,\int_{|\eta|\leq \sqrt{3}\eta_{\max}}
			\hat{u}^2_T(\eta)\int_{-\pi L -\eta^2 T}^{-\pi L} |e^q - \Cg(q)|^2 \d q  \d \eta, \\
			I_2 = &\int_{|\eta|>\sqrt{3}\eta_{\max}} \hat{u}_T^2(\eta) \int_{-\pi L -\eta^2 T}^{-\pi L}
			|e^q - \Cg(q)|^2 \d q \d\eta.
		\end{align*}
		Since $\Cg(q)$ is periodic with a period of $2\pi L$ defined in \eqref{def:Cg}, $\pi L>3\eta_{\max}^2 T$, and $ \eta_{\max}$ satisfies Eq. \eqref{eq:eta_max_choice},  one has
		\begin{align}
			I_1 &= \int_{|\eta|\leq \sqrt{3}\eta_{\max}}  
			\hat{u}_T^2\int_{-\pi L -\eta^2T}^{-\pi L}
			|e^q - e^{-(q+2\pi L)}|^2 \;\d q \d \eta  \notag \\
			& \leq \frac{1}{2}e^{-2(\pi L -3\eta_{\max}^2T) }\|u_T\|^2_{L^2(\Omega_x)},  \label{eq:I1} \\
			I_2 &= \int_{|\eta|>\sqrt{3}\eta_{\max}} 
			\hat{u}_T^2 \int_{-\pi L-\eta^2 T}^{-\pi L} |e^q-\Cg(q)|^2\d q \d \eta \notag  \\
			&\leq 2\int_{|\eta|>\sqrt{3}\eta_{\max}}  \bigg( 
			\hat u_T^2\int_{-\pi L-\eta^2 T}^{-\pi L} e^{2q} \d q 
			+  
			e^{-2\eta^2 T}
			\hat{u}_0^2 \int_{-\pi L-\eta^2 T}^{-\pi L} |\Cg(q)|^2 \d q \bigg)\d \eta 
			\notag \\
			& \leq 2e^{-2\pi L}\|u_T\|_{L^2(\Omega_x)}^2
			+\int_{|\eta|> \sqrt{3}\eta_{\max}} e^{-\eta ^2 T}\hat{u}_0^2 \d \eta  \notag \\
			& \leq 2e^{-2\pi L} \|u_T\|_{L^2(\Omega_x)}^2 + e^{-3\eta_{\max}^2 T}\varepsilon^2. \label{eq:I2}
		\end{align}
		The proof is completed by inserting \eqref{eq:I1} and \eqref{eq:I2} into \eqref{eq:err Cew}.
	\end{proof}
	\begin{lemma}\label{lem:w0-Cw0}
		Assume the assumptions in Lemma \ref{lem:err Cew BD} hold.  
		With $w(t,x,p)$ and $\Cw(t,x,p)$ defined in \eqref{w-heat} and \eqref{w-heat periodic}, respectively, it follows that 
		\begin{equation}
			\|w_0-\Cw_0\|_{L^2(\Omega_x)\times L^2(\Omega_p)} \lesssim  e^{-(\pi L-3\eta_{\max}^2T)}\|u_T\|_{L^2(\Omega_x)} +e^{-3/2\eta_{\max}^2 T} \varepsilon,
		\end{equation}
		where  $w_0 = w(0,x,p)$ and $\Cw_0 = \Cw(0,x,p)$.
	\end{lemma}
	\begin{proof}
		By introducing $\Ce_w = \Cw-w$, one has 
		\begin{equation}\label{wp-w-heat}
			\frac{\D}{\D t} \Ce_w = -\partial_{xxp}  \Ce_w \quad  
			\Ce_w(T,x,p) = 0\;\; \text{in}\; \Omega_x\times \Omega_p.
		\end{equation}
		Multiplying Eq.\eqref{wp-w-heat} by $\Ce_w$, it follows that
		\begin{equation*}
			\int_0^T \int_{\Omega_{x}\times \Omega_p} \partial_t \Ce_w \Ce_w \d x \d p \d t=
			\int_{0}^T\int_{\Omega_x\times \Omega_p} (-\partial_{xxp} \Ce_w) \Ce_w \d x \d p\d t.
		\end{equation*}
		An integration by parts gives 
		\begin{align}\label{eq:dCewDt}
			\int_0^T \frac{\D}{\D t}\|\Ce_w\|^2_{L^2(\Omega_x)\times L^2(\Omega_p)} \d t= \int_{0}^T |\Ce_w(t,\cdot,\pi L)|_{H^1(\Omega_x)}^2 -|\Ce_w(t,\cdot,-\pi L)|_{H^1(\Omega_x)}^2 \d t. 
		\end{align}
		From Lemma~\ref{lem:err Cew BD} and \eqref{eq:dCewDt}, one obtains  
		\begin{equation*}
			\begin{aligned}
				&\|\Ce_w(0,\cdot,\cdot)\|_{L^2(\Omega_x)\times L^2(\Omega_p)}
				\leq \big(\int_0^T |\Ce_w(t,\cdot,-\pi L)|_{H^1(\Omega_x)}^2 \d t \big)^{1/2}  
				\\
				\lesssim \; &
				e^{-(\pi L -3\eta_{\max}^2 T)} \|u_T\|_{L^2(\Omega_x)} + e^{-3/2\eta_{\max}^2 T}\varepsilon.
			\end{aligned}
		\end{equation*}
		The proof is finished.
	\end{proof}
	\begin{theorem}\label{thm:err of uhd-u}
		Suppose $\eta_{\max}$ satisfies \eqref{eq:eta_max_choice},
		$\pi L$ is large enough to satisfies 
		\eqref{eq:require pi L}, and $u_T\in H^s(\Omega_x)$ satisfies \textbf{(H1)} or \textbf{(H2)}. 
		Define $u_h^d=e^p w_h^d$ with $p\in \tilde \Omega_p= (p^{\Diamond},p^{\Diamond}+\Cm)$, and $p^{\Diamond} = \eta_{\max}^2 T$, $\Cm $ is a length such that $\Cm  \leq \min\{ \pi L -\eta_{\max}^2 T, 1/2\eta_{\max}^2T \}$.
		There holds 
		\begin{equation}
			\|u_0-u_h^d\|_{L^2(\tilde \Omega_p)\times L^{2}(\Omega_x)}\lesssim (\triangle p + \triangle x^s)e^{ \eta_{\max}^2 T+ \Cm }\|w_0\|_{H^s(\Omega_x)\times H^1(\Omega_p)} +\varepsilon.
		\end{equation}
	\end{theorem}
	\begin{proof}
		From the recovery rule in Theorem \ref{thm:recovery of u}, it follows that 
		\begin{equation}
			\|u_0-u_h^d\|_{L^{2}(\tilde \Omega_p)\times L^{2}(\Omega_x)} 
			= \|e^p w_0 - e^pw_h^d\|_{L^{2}(\tilde \Omega_p)\times L^{2}(\Omega_x)}.
		\end{equation}
		Using the triangle equality  and 
		Eq.\eqref{eq:err spectral}, it yields 
		\begin{equation}
			\begin{aligned}\label{eq:err uhd}
				&\|u_0 -u_h^d\|_{L^2(\tilde \Omega_p)\times L^2(\Omega_x)}  \\
				\leq \;&\|e^p \Cw_0- e^p w_h^d\|_{L^{2}(\tilde \Omega_p)\times L^{2}(\Omega_x)}
				+\|e^p w_0 -e^p\Cw_0\|_{L^{2}(\tilde \Omega_p)\times L^2(\Omega_x)}  \\
				\leq \;&(\triangle p + \triangle x^s)e^{\eta_{\max}^2 T + \Cm} \|w_0\|_{H^s(\Omega_x)\times H^1(\Omega_p)}
				+e^{\eta_{\max}^2T+\Cm }\|w_0-\Cw_0\|_{L^2(\Omega_x)\times L^2(\tilde \Omega_p)}.
			\end{aligned}
		\end{equation}
		The proof is completed by using Lemma~\ref{lem:w0-Cw0} , Eq.\eqref{eq:require pi L} and  $\Cm \leq \frac{1}{2} \eta_{\max}^2T$.
	\end{proof}
	
	As can be seen from Theorem \ref{thm:recovery of u}, the error increases as the computation time progresses from $T$ to $0$, which is also reflected in the numerical calculations. The error we provided here corresponds to the error at the moment $t=0$, which is the worst-case scenario. This aligns with the numerical test in section ~\ref{sec:Approximation of a piece-wise smooth solution}, where the error increases as the time approaches zero. Consequently, the error estimation in Theorem ~\ref{thm:err of uhd-u} can be considered as taking the $L^{\infty}$ norm in the time direction.

	We point out that our algorithm offers three significant advantages. Firstly, the Hamiltonian nature of the semi-discrete system \eqref{eq:tilde w} allows for the development of a {\it computationally stable} scheme, provided the CFL condition is met. Secondly, Theorem \ref{thm:recovery of u} ensures a straightforward recovery of the original variable $\Bu_h$ from $\Bw_h$. Although a time-dependent exponential growth factor $e^{\eta_{\max}^2T}$ is introduced at this step, we highlight that the value of $\eta_{\max}$ is relatively small, as noted in Remark~\ref{remark:recovery u}. Furthermore, by choosing an appropriate interval in the extended space to recover the original variables, essentially truncating the Fourier modes of the input data, the originally ill-posed problem becomes well-posed. Although the initial-value problem to the Schr\"odingerised equation is well-posed even without this Fourier truncation, to recover $\Bu_h$ one needs {\it finite} Fourier mode.
	Thirdly, our algorithm provides a quantum algorithm for solving ill-posed problems that can be implemented on a quantum computer (see section \ref{sec:quantum}).


\subsection{A higher order improvement in $p$}\label{higher-p}
The first order convergence in $p$ was due to the lack of regularity in $w$, which is continuous but not in $C^1$. The convergence rate can be improved by choosing a smoother initial data for $w$ in the extended space. For example,  consider the following setup: 
\begin{equation}
	\begin{aligned}\label{w-heat_modified}
		\frac{\D}{\D t} w= - \partial_{xxp}w \quad \text{in}\; \Omega_x \times \Omega_p, \quad T>t\geq 0, \quad 
		w(T)= g(p)u_T(x)\quad \text{in}\; \Omega_x \times \Omega_p,
	\end{aligned}
\end{equation}
where  $g(p)$ defined in $\bbR$ satisfies 
\begin{equation}
	g(p) = \begin{cases}
		h(p)\quad &p\in (-\infty, 0],\\
		e^{-p}\quad &p\in (0,\infty),
	\end{cases}
\end{equation}
with $h(p) \in L^2((-\infty,0))$. It is easy to check 
the Fourier transform of $w$ denoted by $\hat w$ satisfies
\begin{equation*}
	\frac{\D}{\D t} \hat w = -i \xi \eta^2 \hat w,
	\quad 
	\hat{w}(T,\xi,\eta) = \hat{g}(\xi) \hat{u}_T(\eta),
\end{equation*}
where $\hat{g}(\xi) = \mathscr{F}(g)$, and 
$w(t,x,p) = \int_{\bbR} g(p+\eta^2(t-T)) \hat{u}_T(\eta)e^{-i x\eta}\;\d\eta$. In addition, 
the results in section~\ref{sec:The general framework},~\ref{sec:The backward heat equation} still hold, since we do not care the solution when $p<0$. 

From Eq.\eqref{eq:err spectral} and Theorem~\ref{thm:err of uhd-u}, the limitation of the convergence order mainly comes from the non-smoothness of $g(p)$. In order to improve the whole convergence rates, we could smooth $g(p)$ by using higher order polynomials, for example
\begin{equation}
	h(p) = \Cp_{2k+1} (p),\quad p\in [-1,0],   \quad \quad   h(p) = e^{p},\quad p\in (-\infty,-1),
\end{equation}
where $\Cp_{2k+1}$ is a  Hermite interpolation  \cite[section~2.1.5]{stoer} and satisfies  
\begin{equation*}
	\begin{aligned}
		\big(\partial_{p^{\alpha}}\Cp_{2k+1}(p)\big)|_{p=0}&=\big(\partial_{p^{\alpha}}(e^{-p})\big)|_{p=0} =(-1)^{\alpha},\\
		\big(\partial_{p^{\alpha}}\Cp_{2k+1}(p)\big)|_{p=-1}&=\big(\partial_{p^{\alpha}}(e^{p})\big)|_{p=-1}=e^{-1}.
	\end{aligned}
\end{equation*}
Here $0\leq \alpha\leq k$ is an integer. 
Therefore, $g\in C^k(\bbR)$ and one has the estimate of $u_h^d$ as
\begin{equation*}
	\|u_0 - u_h^d\|_{L^2(\tilde \Omega_p)\times L^2(\Omega_x)}
	\lesssim (\triangle p^{k+1} + \triangle x^s)\|u_0\|_{H^s(\Omega_x)} +\varepsilon.
\end{equation*}
The choice of $k$ can be chosen such that $\triangle p^{k+1}  = \mathscr{O}(\triangle x^s)$.  We use $k=1$ in our numerical experiments in  section~\ref{sec:numerical tests}.

	The linear combination of Hamiltonian simulation (LCHS) method introduced in  \cite{ALL2023LCH} employs the continuous Fourier transform in $p$, and then applying numerical integration to complete the inverse of Fourier transform, choosing $p=0$ to recover the target variables $u$. Initially, it is a first-order method. Subsequent enhancements in \cite{ACL2023LCH2} improve the accuracy of the original LCHS algorithms by introducing new kernel functions with faster decay and truncating the Fourier transform integral to a finite interval $[-X,X]$, where  $ X = (\log(1/\varepsilon))^{1/\beta} $ ($ 0 < \beta < 1 $) truncates the continuous Fourier variable $\xi$.  
	We emphasize that the fundamental principle remains consistent with our approach to smooth extension. Indeed,  the Fourier transforms of the kernel functions are $e^{-p}$ for $p>0$, whereas they exhibit significant differences on the negative real axis. Therefore, the transformed kernel function can be interpreted as a smooth extension in the $p$ space.

\section{Application to the convection equations with purely imaginary wave speed}

In this section, we concentrate on the linear convection equation with an imaginary convection speed which is unstable thus hard to simulate numerically. This is a simple model for more complicated, physically interesting problems such as Maxwell's equation with negative index of refraction \cite{shelby2001experimental, parazzoli2003experimental} that has applications in meta materials. 
We consider the one-dimensional model with periodic boundary condition
\begin{equation}\label{eq:imag convection eq}
	\partial_t v = i \partial_x v \quad \;\text{in}\;\Omega_x, \quad v(x,0) = v_0,
\end{equation}
where $\Omega_x =[-\pi,\pi]$. This problem is ill-posed since, by  taking a Fourier transform on $x$, one gets
\[ \partial_t \hat{v}= -\eta \hat{v},\]
where $\hat{v}$ is the Fourier transform of $v$ in $x$. 
Assume the solution of \eqref{eq:imag convection eq} in Sobolev space exits, the exact solution is 
\begin{equation}
	v(x,t) = \int_{\bbR} e^{-ix\eta}e^{-\eta t}\hat v_0\;\d \eta.    
\end{equation}
Clearly, the solution contains exponential growing modes corresponding to $\eta<0$.  
Suppose $\hat v_0 \in H_0^s$, then $v(t,x)\in H^s(\Omega_x)$. 
The Schr\"odingerisation gives 
\begin{equation}\label{eq:imag convection eq w}
	\begin{aligned}
		\frac{\D}{\D t} w = -i\partial_{xp} w\quad \text{in}\;\Omega_x\times \Omega_p, \quad 
		w(0,x,p) = e^{-|p|}v_0(x) \quad \text{in}\;\Omega_x\times \Omega_p,
	\end{aligned}
\end{equation}
where $\Omega_p = (-\infty,\infty)$. 
The Fourier transform shows
\begin{equation}\label{eq:imag hat w}
	\frac{\D}{\D t} \hat w =  i\xi \eta \hat w, \quad \hat{w}(0,\xi,\eta)=\frac{\hat v_0(\eta)}{\pi(1+\xi^2)},
\end{equation}
which has a bounded solution for all time, with the exact solution given by
$$w(t,x,p) =\int_{\bbR}\int_{\bbR} e^{-i (x\eta + p\xi)} e^{i\xi\eta t}\hat w_0\;\d\eta \d \xi=\int_{\bbR} e^{-|p-\eta t|} \hat{v}_0(\eta) e^{-ix\eta}\;\d \eta. $$
Obviously, the $w$-equation \eqref{eq:imag convection eq w} and the Hamiltonian system \eqref{eq:imag hat w} can be simulated by a stable scheme in both quantum computers and classical ones. 
We define an approximate solution to the truncation as follows.
\begin{definition}
	Define 
	\begin{equation}
		w_{\eta_{\max}} = \int_{\bbR} e^{-|p-\eta t |}\hat v_0 \chi_{\max} e^{-ix\eta}\;\d \eta,
	\end{equation}
	where $\chi_{\max}$ is a characteristic function of $[-\eta_{\max},\eta_{\max}]$. The truncated solution is 
	\begin{equation}\label{eq:v_etamax}
		v_{\eta_{\max}}(t,x,p) = e^p w_{\eta_{\max}}(t,x,p),\quad 
		\text{or}\;\quad 
		v_{\eta_{\max}}(t,x,p) = e^p \int_{p}^{\infty} w(t,x,q)\;\d q, \quad 
		p>p^{\Diamond},
	\end{equation}
	with $p^{\Diamond} = \eta_{\max} T$.
\end{definition}

Following Theorem~\ref{thm:recovery of u}, one gets the estimate for recovery. The proof is omitted here.
\begin{theorem}\label{thm:recovery of u imag conv}
	Let $v(t,x)$ and $ w(t,x,p)$  be the exact solution of \eqref{eq:imag convection eq}, \eqref{eq:imag convection eq w}, respectively,
	on the interval $t\in[0,T]$. Suppose there exists an a priori bound $\|v_T\|_{H^s(\Omega_x)}$ and $v_{\eta_{\max},T} = v_{\eta_{\max}}(T,x,p)$ is given by \eqref{eq:v_etamax} where $p>p^{\Diamond} = \eta_{\max} T$. Then 
	\begin{equation}
		\|v_T-v_{\eta_{\max},T}\|_{L^2(\Omega_x)} \leq 
		\frac{\delta(\eta_{\max})}{\eta_{\max}^s},
	\end{equation} 
	where $\delta(\eta_{\max}) = (\int_{\eta_{\max}}^{\infty} (1+\eta^2)^s |\hat v_T|^2\;\d\eta)^{\frac 12}$. 
\end{theorem}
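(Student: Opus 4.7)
The plan is to follow the template of the proof of Theorem~\ref{thm:recovery of u}, since the only essential change is that the dispersion symbol is now linear in $\eta$ instead of quadratic. The argument splits into three stages: (i) show that, for $p>p^{\Diamond}=\eta_{\max}T$, the Schr\"odingerized recovery $v_{\eta_{\max},T}(\cdot,p)$ coincides with the $\eta_{\max}$-truncated inverse Fourier transform of $\hat v_T$; (ii) invoke Parseval's identity to reduce the physical-space $L^2$ error to a Fourier-side tail integral; (iii) absorb that tail using the $H^s$-regularity encoded in $\delta(\eta_{\max})$.

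Stage (i) is the only place where the linear dispersion actually enters. The threshold $p^{\Diamond}=\eta_{\max}T$ is chosen precisely so that $p-\eta T\ge 0$ for every $|\eta|\le\eta_{\max}$, which lets us drop the absolute value and write $e^{-|p-\eta T|}=e^{-(p-\eta T)}$ inside the defining integral of $w_{\eta_{\max}}(T,x,p)$. Multiplying by $e^p$ then cancels the $p$-dependence entirely:
\begin{equation*}
    v_{\eta_{\max},T}(x,p) \;=\; e^{p}\!\int_{|\eta|\le\eta_{\max}} e^{-(p-\eta T)}\hat v_0(\eta)\,e^{-ix\eta}\,\d\eta
    \;=\; \int_{|\eta|\le\eta_{\max}} \hat v_T(\eta)\,e^{-ix\eta}\,\d\eta,
\end{equation*}
where the last equality uses the exact-solution relationship between $\hat v_T$ and $\hat v_0$. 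The integral form of \eqref{eq:v_etamax} yields the same expression because $\int_p^{\infty} e^{-(q-\eta T)}\,\d q = e^{-(p-\eta T)}$, so the $p$-integration simply reproduces the pointwise formula.

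Once (i) is established, Parseval's identity delivers
\begin{equation*}
    \|v_T-v_{\eta_{\max},T}\|_{L^2(\Omega_x)}^{2} = \int_{|\eta|>\eta_{\max}} |\hat v_T(\eta)|^{2}\,\d\eta,
\end{equation*}
and then multiplying and dividing the integrand by $(1+\eta^2)^s$ and using $(1+\eta^2)^s\ge \eta^{2s}\ge \eta_{\max}^{2s}$ on the tail $|\eta|>\eta_{\max}$ produces the claimed bound $\delta(\eta_{\max})/\eta_{\max}^{s}$, exactly as in the last line of the proof of Theorem~\ref{thm:recovery of u}.

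No substantive obstacle is anticipated: the tail bound is routine, and the heart of the argument is the identification in stage (i), which depends only on the explicit formula for $w$ and on the specific choice $p^{\Diamond}=\eta_{\max}T$. The conceptually novel point relative to Theorem~\ref{thm:recovery of u} is merely that the linear symbol $\eta$ (versus the quadratic $\eta^2$) makes $p^{\Diamond}$ scale linearly rather than quadratically in $T$, which is a milder requirement on the extended domain.
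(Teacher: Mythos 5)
Your proof is correct and follows exactly the route the paper intends, namely the same Parseval-plus-tail-estimate argument used for Theorem~\ref{thm:recovery of u} (whose proof the paper explicitly says to reuse here, omitting the details). Your stage (i) usefully spells out the step that is only implicit in the paper's proof of the heat-equation analogue — that the choice $p^{\Diamond}=\eta_{\max}T$ makes $p-\eta T\ge 0$ on the support of $\chi_{\max}$, so the absolute value drops and $e^{p}w_{\eta_{\max}}(T,x,p)$ becomes the $\eta$-truncated inverse transform of $\hat v_T$ — and the remaining Parseval and $(1+\eta^2)^s\ge\eta_{\max}^{2s}$ steps match the paper's proof of Theorem~\ref{thm:recovery of u} verbatim.
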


Similar to Remark~\ref{remark:recovery u}, 
it follows from $v_T\in H^s(\Omega_x)$ that $\delta(\eta_{\max})$ is a small number. In order to keep the error within $\varepsilon$, one could choose 
$\eta_{\max} \geq (\frac{\delta}{\varepsilon})^{\frac{1}{s}}$. For example, 
if $u = \alpha(t)\cos(\omega_0 x )$,
then one gets $v_T = e^{p}w(T,x,p)$ for $p>p^{\Diamond} = \eta_{\max}T = \omega_0 T$.

\subsection{Discretization for Schr\"odingerisation of the convection equations with imaginary convective terms}

In this section, we give the discretization of Schr\"odingerisation for the convection equations with  purely  imaginary wave speed. 
If we use the spectral method, the algorithm is quite similar to section~\ref{sec:quantum algorithms for the backward Eq}.
It is obtained by letting $A = D_x$ in \eqref{eq:tilde w}. 
It is obvious to see that $A$ is Hermitian and has both positive and negative eigenvalues. 
It is hard to construct a stable scheme if one solves the original problem.   
However, after Schr\"odingerisation, it becomes a Hamiltonian system which is quite easy to 
approximate in both classical and  quantum computers. Applying Theorem~\ref{thm:recovery of u imag conv}, one gets the desired variables.

\section{Numerical tests}\label{sec:numerical tests}
In this section, we perform several numerical simulations for the backward heat equation and imaginary convection equation by Schr\"odingeriza-tion in order to check the performance 
(in recovery  and convergence rates) of our methods. 
For all numerical simulations, we perform the tests in the classical computers by using the Crank-Nicolson method for temporal discretization. 
In addition, we apply the trapezoidal rule for numerical integration of all of the tests. In order to get higher-order convergence rates of discretization~\eqref{eq:tilde w}, we smooth the initial data $w(T)$ in \eqref{w-heat} by choosing
\begin{equation}
	g(p) =  \begin{cases}
		(-3+3e^{-1})p^3+(-5+4e^{-1})p^2-p+1  \quad p\in(-1,0), \\
		e^{-|p|} \quad p\in (-\infty,-1]\cup[0,\infty).
	\end{cases}
\end{equation}
Therefore, $g(p)\in H^2(\Omega_p)$.

\subsection{Approximation of smooth solutions}

These tests are used to evaluate the recovery  and the order of  accuracy of our method. More precisely, we want to show the  choice of $p^{\Diamond}$ in \eqref{eq:choice of pDiamond} and convergence rates in Theorems~\ref{thm:err of uhd-u}.

In the first test, we consider the backward heat equation in $\Omega_x=[0,2]$ with Dirichlet boundary condition $u(T,0) = u(T,2) = 0$ and $T=1$. We take a smooth solution 
\begin{equation}\label{eq:test1}
	u = \exp(-\frac{\pi^2}{4}t)\sin(\frac{\pi x}{2}),
\end{equation}
and the initial data is $u_T = \exp(-\frac{\pi^2}{4})\sin(\frac{\pi x}{2})$. We use the finite difference method for  spatial discretization with $A$ defined by
\begin{equation}\label{eq:spatial discretization}
	A =\frac{1}{\triangle x^2} \begin{bmatrix}
		-2 &1 & & & &\\
		1 &-2 & 1 & & &\\
		&\ddots  &\ddots  &\ddots  &\\
		& &1  & -2 & 1\\
		& & & 1 & -2
	\end{bmatrix},
\end{equation}
where $\triangle x$ is the mesh size.
The computation stops at $t=0$. According to \eqref{eq:choice of pDiamond}, one has $\eta_{\max} = \frac{\pi}{2}$ and $p^{\Diamond} = \frac{\pi^2}{4}$. The numerical results are shown in Fig.~\ref{fig:recovery of u} and Tab.~\ref{tab:err of recovery dis conti}. 
From  the plot on the left in Fig.~\ref{fig:recovery of u}, it can be seen that the error between $u$ and $u_h^d = w_h^d e^{p}$ drops precipitously at $p^{\Diamond}$, and  numerical solutions of Schr\"odingerisation recovered by choosing one point or numerical integration are close to the  exact solution. 
According to Tab.~\ref{tab:err of recovery dis conti}, second-order convergence rates of $\|w_h^de^p-u\|_{L^2(\Omega_x)\times L^2(\tilde \Omega_p)}$  and $\|u_d^{*}-u\|_{L^2(\Omega_x)}$ are obtained, respectively, where $u_d^* = \frac{\int_{3}^{10} w_h^d\;\d p}{e^{-3}-e^{-10}}$.

\begin{figure}[http]
	\quad \quad 
	\includegraphics[width=0.4\linewidth]{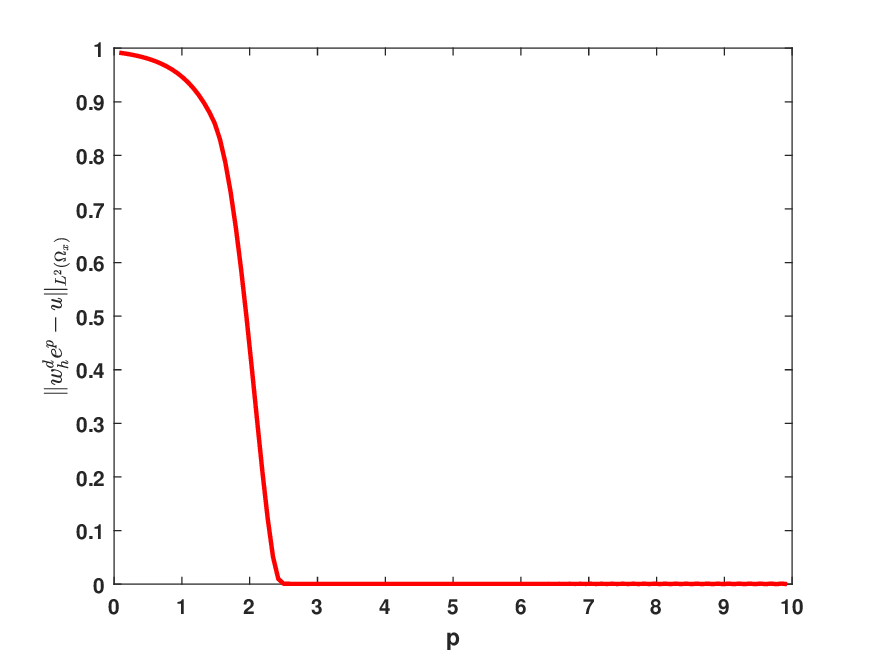}
	\includegraphics[width=0.4\linewidth]{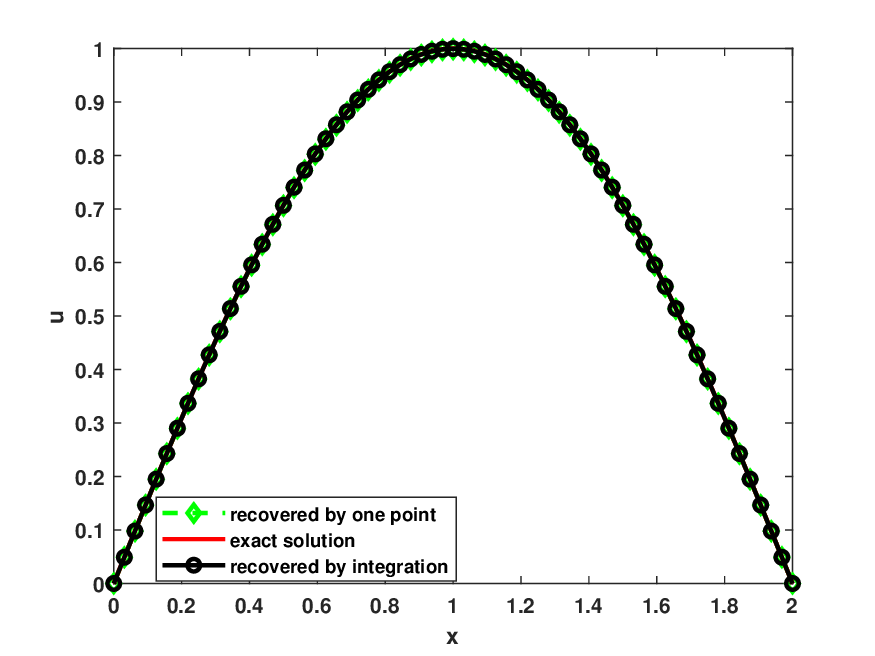}
	\caption{ Left: $\|w_h^de^{p}-u\|_{L^2(\Omega_x)}$
		with respect to $p$, with $w_h^d$ computed by \eqref{eq:tilde w}-\eqref{eq:whd(t,x,p)}. Right: the recovery from Schr\"odingerisation by choosing $p>p^{\Diamond}= \pi^2/4 $.	
	}\label{fig:recovery of u}
\end{figure}
\begin{table}[htbp]
	\centering
	\begin{tabular}{lcccccc}
		\midrule [2pt]
		$(\triangle p, \triangle x)$          & $(\frac{10}{2^7}, \frac{1}{2^5})$ & order & $(\frac{10}{2^8}, \frac{1}{2^6})$ & order & $(\frac{10}{2^9}, \frac{1}{2^7})$ & order\\ 
		\hline
		$\|u_h^d-u\|_{L^2(\Omega_x)\times L^2(\tilde \Omega_p)}$       
		&1.64e-03  &-  &3.25e-04 &2.33  &8.05e-05 &2.01 \\ \hline
		$\|u_d^*-u\|_{L^2(\Omega_x)}$ 
		&7.48e-04  &- &1.86e-04  &2.01 &4.56e-05 &2.00\\
		
		\midrule [2pt]
	\end{tabular}            
	\caption{
		The convergence rates of  $\|u_h^d-u\|_{L^2(\Omega_x)\times L^2(\tilde \Omega_p)}$ and $\|u_d^*-u\|_{L^2(\Omega_x)}$, where $u_h^d$ is computed by  \eqref{eq:tilde w}-\eqref{eq:uhd} and $u_d^* = \frac{\int_{3}^{10} w_h^d\;\d p}{e^{-3}-e^{-10}}$, $\tilde \Omega_p = [3,10]$ and $\triangle t = \frac{\triangle x}{2^3}$.
	}\label{tab:err of recovery dis conti}
\end{table}

Next, we consider another exact solution set by
\begin{equation}
	u = \exp(- 9\pi^2 t) \sin(3\pi x) \quad \text{in}\; [0,2],
\end{equation}
with periodic boundary conditions and the initial data is $u_T = \exp(-9\pi^2) \sin (3\pi x)$.  
We apply the spectral method to discrete the $x$-domain.
From Theorem~\ref{thm:recovery of u} and Remark~\ref{remark:recovery u}, we deduce that $p^{\Diamond} = \omega_0^2 T = 9\pi^2\approx 90$.
From Fig.~\ref{fig:recovery of u2}, it can be seen that $u = e^{p} w$ when $p\geq p^{\Diamond}\approx 90$, and the numerical solutions are very close to the exact solution.
In Tab.~\ref{tab:err of recovery conti}, the $L^2$ errors of the method \eqref{eq:tilde w}-\eqref{eq:uhd} are presented. They show the optimal order 
$\|u_h^d-u\|_{L^2(\Omega_x)\times L^2(\tilde \Omega_p)}\sim \triangle p^2$, $\|u_d^*-u\|_{L^2(\Omega_x)}\sim \triangle p^2$ are obtained  when $\triangle p\to 0$, where 
$\tilde \Omega_p =[90,95]$, and $u_h^d = e^p w_h^d$, $u_d^* =  \frac{\int_{90}^{95}w_h^d\;\d p}{e^{-90}-e^{-95}}$.

\begin{figure}[t!]
	\quad \quad 
	\includegraphics[width=0.42\linewidth]{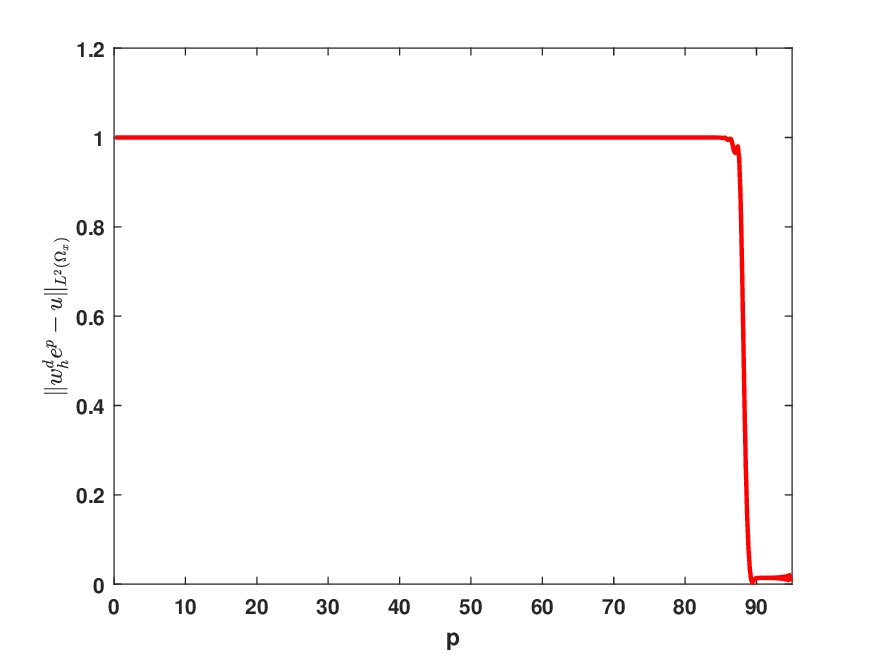} 
	\includegraphics[width=0.42\linewidth]{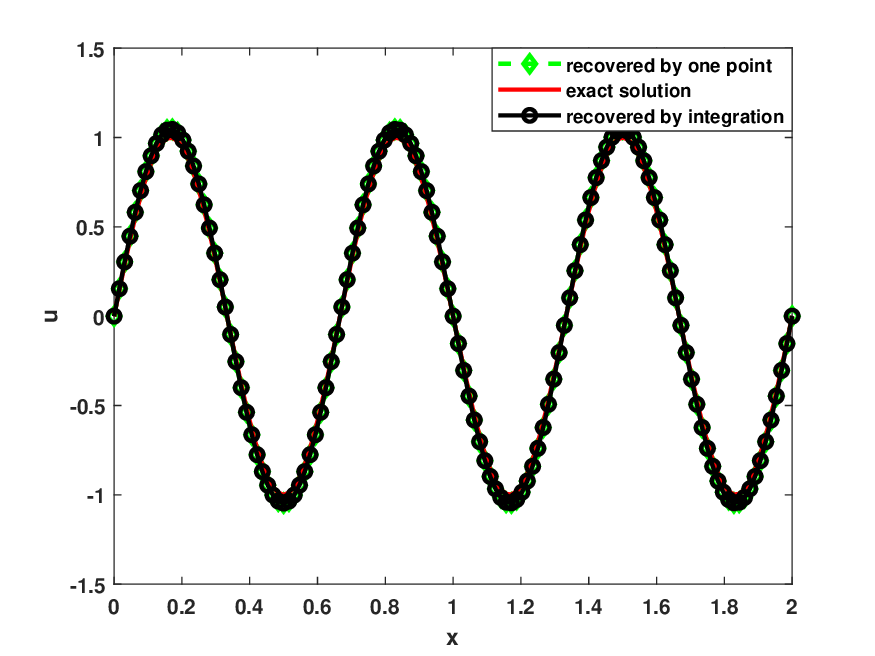}
	\caption{ Left: $\|w_h^de^{p}-u\|_{L^2(\Omega_x)}$
		with respect to $p$, with $w_h^d$ computed by \eqref{eq:tilde w}-\eqref{eq:whd(t,x,p)}. Right: the recovery from Schr\"odingerisation by choosing $p>p^{\Diamond} = 9\pi^2$.	
	}\label{fig:recovery of u2}
\end{figure}
\begin{table}[htbp]
	\centering
	\begin{tabular}{lcccccc}
		\midrule [2pt]
		$(\triangle p,\triangle x)$          & $(\frac{95}{2^8},\frac{1}{2^6})$ & order & $(\frac{95}{2^9},\frac{1}{2^7})$ & order & $(\frac{95}{2^{10}},\frac{1}{2^8})$ & order\\ 
		\hline
		$\|u_h^d-u\|_{L^2(\Omega_x)\times L^2(\tilde \Omega_p)}$       
		&4.73e-01   &- &1.21e-01 &1.96  &3.07e-02 &1.97 \\ \hline
		$\|u_d^*-u\|_{L^2(\Omega_x)}$ 
		&1.37e-01  &- &5.08e-02 &1.43  &1.35e-02 &1.90\\
		\midrule [2pt]
	\end{tabular}            
	\caption{
		The convergence rates of  $\|u_h^d-u\|_{L^2(\Omega_x)\times L^2(\tilde \Omega_p)}$ and $\|u_d^*-u\|_{L^2(\Omega_x)}$, respectively,  where $u_h^d$ is computed by  \eqref{eq:whd(t,x,p)}-\eqref{eq:uhd} and $u_d^* = \frac{\int_{90}^{95}w_h^d\;\d p}{e^{-90}-e^{-95}}$, $\tilde \Omega_p =[90,95]$, $\triangle t = \frac{\triangle x}{2^4}$.
	}\label{tab:err of recovery conti}
\end{table}

\subsection{Approximation of a piece-wise smooth solution}\label{sec:Approximation of a piece-wise smooth solution}
Now, we consider a piece-wise smooth solution in $(-\infty,\infty)$
\begin{equation}\label{eq:initial heat Eq}
	u(x,0) =  \begin{cases}
		-|x|+1 \quad x\in [-1,1],\\
		0 \quad otherwise.
	\end{cases}
\end{equation} 
The tests are done in the interval $x\in [-20,20]$ and  $\triangle x = \frac{40}{2^7}$, $\triangle t = \frac{1}{2^8}$. We use the finite difference method to simulate the heat equation with the initial condition given by \eqref{eq:initial heat Eq}. Thus, we get the approximate solution to $u_T$ at $T=1$.  According to Theorem~\ref{thm:recovery of u},
we choose $\varepsilon = \triangle x$, $\delta(\eta_{\max}) = 1$ and $s=2$, $2$ and $1$ for recovering the target variables $u(t)|_{t=0.5}$, $u(t)|_{t=0.25}$ and $u(t)|_{t=0}$, respectively.
Thus, it yields 
$\eta_{\max} = (\frac{1}{\triangle x})^{1/2}= \sqrt{3.2}$, $(\frac{1}{\triangle x})^{1/2}= \sqrt{3.2}$ and
$\frac{1}{\triangle x}=3.2$.
The results are shown in Fig.~\ref{fig:recovery of u_3} with $u_h^d$ computed by 
\eqref{eq:tilde w}-\eqref{eq:uhd} and $p\in [-30,30]$, $\triangle p = \frac{60}{2^{10}}$. From the plot of Fig.~\ref{fig:recovery of u_3}, 
we find that the error is larger when $t$ is smaller, which is consistent with the estimate in Theorem~\ref{thm:recovery of u}.
\begin{figure}[htbp]
	\includegraphics[width=0.31\linewidth]{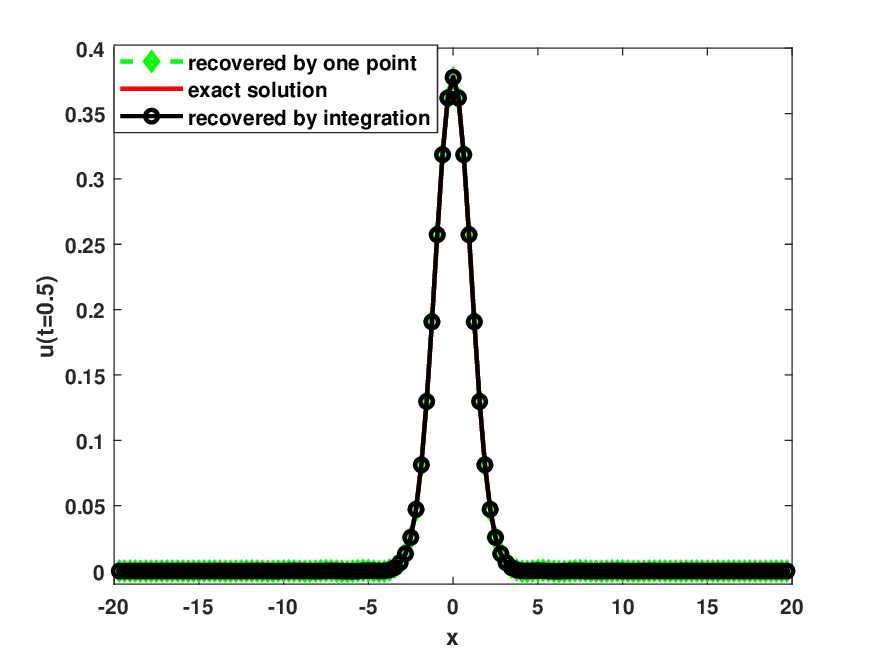}
	\includegraphics[width=0.31\linewidth]{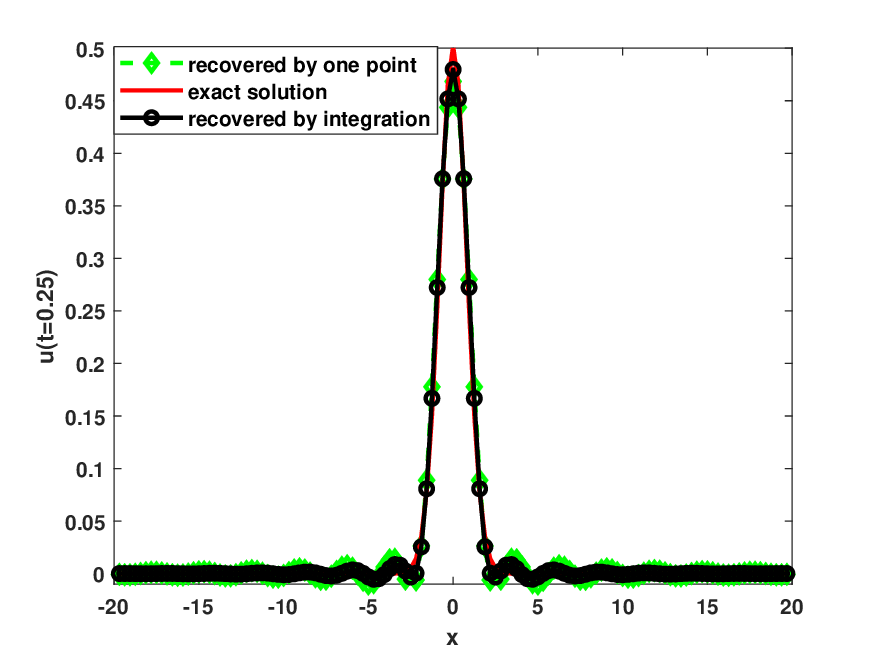}
	\includegraphics[width=0.31\linewidth]{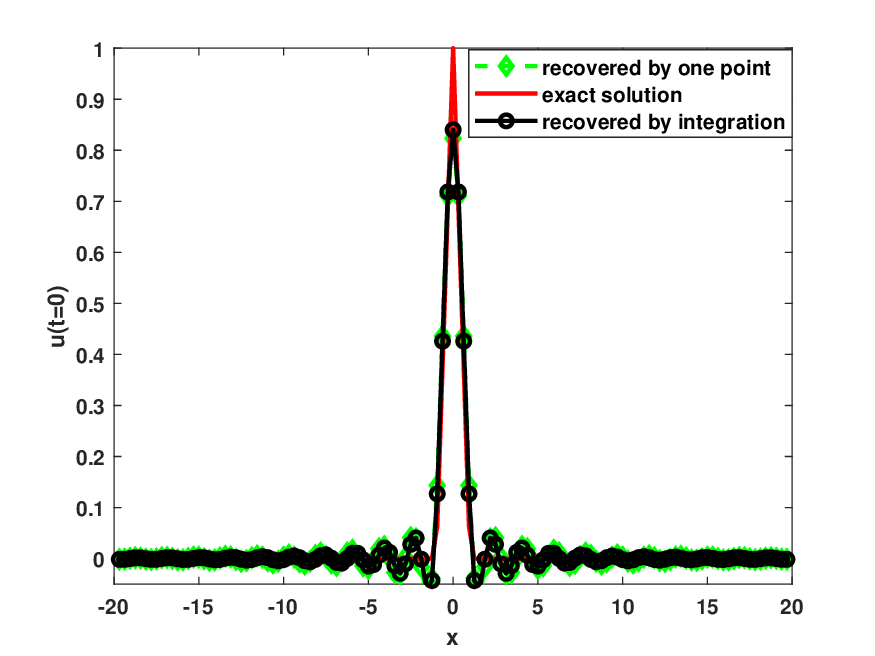}
	\caption{
		The exact solution in the legend is the numerical solution at time 
		$t=0.5$, $t=0.25$, and $t=0$, respectively, obtained by solving the forward heat conduction equation using the finite difference method.
		The results of Schr\"odingerisation are shown with  $\eta_{\max} = (\frac{1}{\triangle x})^{1/2} = \sqrt{3.2},(\frac{1}{\triangle x})^{1/2} = \sqrt{3.2}, \frac{1}{\triangle x} =3.2$, respectively.
	}\label{fig:recovery of u_3}
\end{figure}

\subsection{Input data with noises}

In this test, we investigate the cases with the input data containing noises. The exact solution  in $\bbR$ is set by 
\begin{equation}
	u(x,t) = \frac{1}{\sqrt{1+4t}}e^{-\frac{x^2}{1+4t}}.
\end{equation} 
The input data is $u_T^{\zeta} = u_T + \zeta_0 \, \text{rand}(x)$. The magnitude $\zeta_0$ indicates the noise level of the measurement data, and $\text{rand}(x)$ is a random number such that $-1\leq \text{rand}(x)\leq 1 $.
The tests are truncated to the interval $ [-10,10]$ and $T=1$. The noise levels are $\zeta_0 = 6\times 10^{-2}$, $6\times 10^{-3}$ and $6\times 10^{-4}$. We apply \eqref{eq:tilde w}-\eqref{eq:uhd} to discretize the Schr\"odingerisation with $p \in [-20,20]$ and $\triangle p = \frac{10}{2^6}$.
We use the finite difference method to discretize the $x$-domain with $\triangle x = \frac{10}{2^6}$, $\triangle t = \frac{1}{2^{10}}$.
The results are shown in Fig.~\ref{fig:input with noise}.
According to Theorem~\ref{thm:recover of u with input noise}, it is hard to find $\eta_{\max}$ to satisfy $\frac{\delta(\eta_{\max})}{\eta_{\max}^s} + e^{\eta_{\max}^2 T}\zeta_0 \leq \varepsilon$ for any fixed $\zeta_0$, $T$ and $\varepsilon$.   
However one could choose 
$\eta_{\max} = \sqrt{ \ln \big( (\frac{1}{\zeta_0})^{\frac{1}{T}} (\ln \frac{1}{\zeta_0})^{\frac{-s}{2T}}\big)}$
to get $\|u(t)-u_{\eta_{\max},\zeta}(t)\|_{L^2(\Omega_x)}\lesssim (\ln(\frac{1}{\zeta_0}))^{-\frac{s}{2T}}\to 0$ as 
$\zeta_0$ tends to zero  under the  assumption $T\leq \ln (1/\zeta_0)$, $\zeta_0<1$.
By investigating the cases of different $\zeta_0$, we find the numerical solution of Schr\"odingerisation approximates $u_{\eta_{\max},\zeta}$ and tends to the exact solution as $\zeta_0$ goes to zero.
The oscillation of the numerical solution comes from two aspects, one is the truncation of the calculation area of $x$, and the other is the finite difference method of calculating the discontinuous input data. 

\begin{figure}[htbp]
	\includegraphics[width=0.3\linewidth]{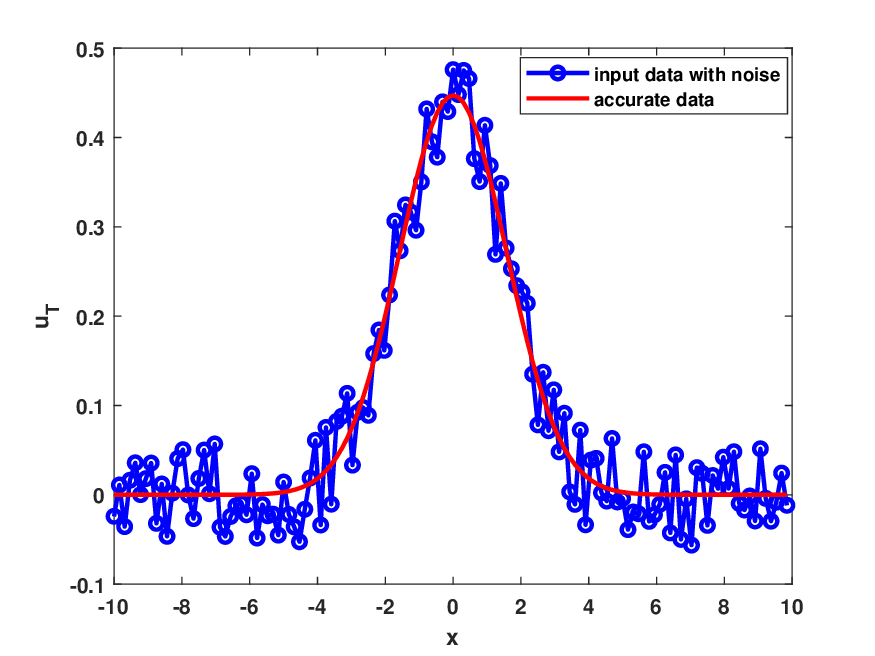}
	\includegraphics[width=0.3\linewidth]{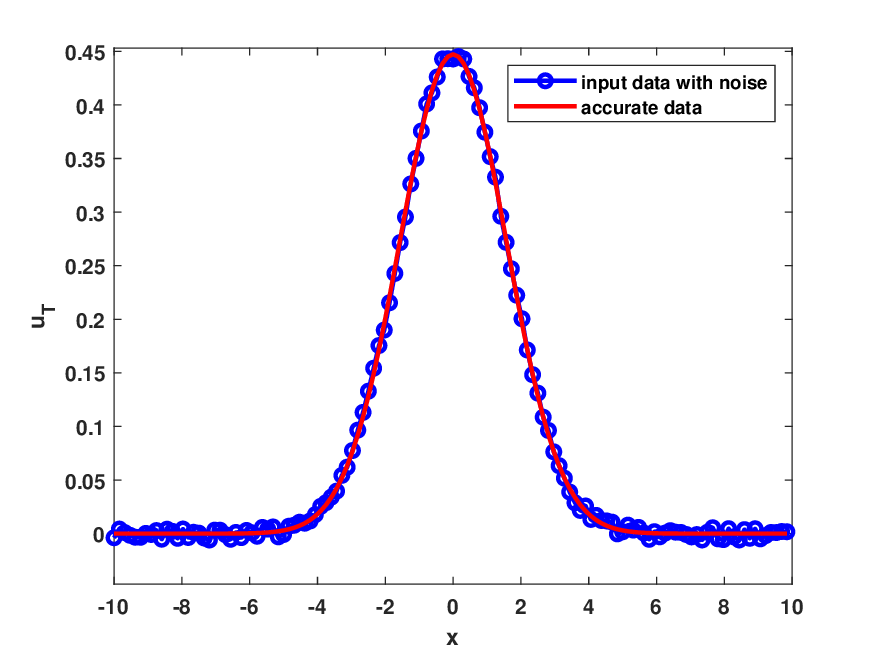}
	\includegraphics[width=0.3\linewidth]{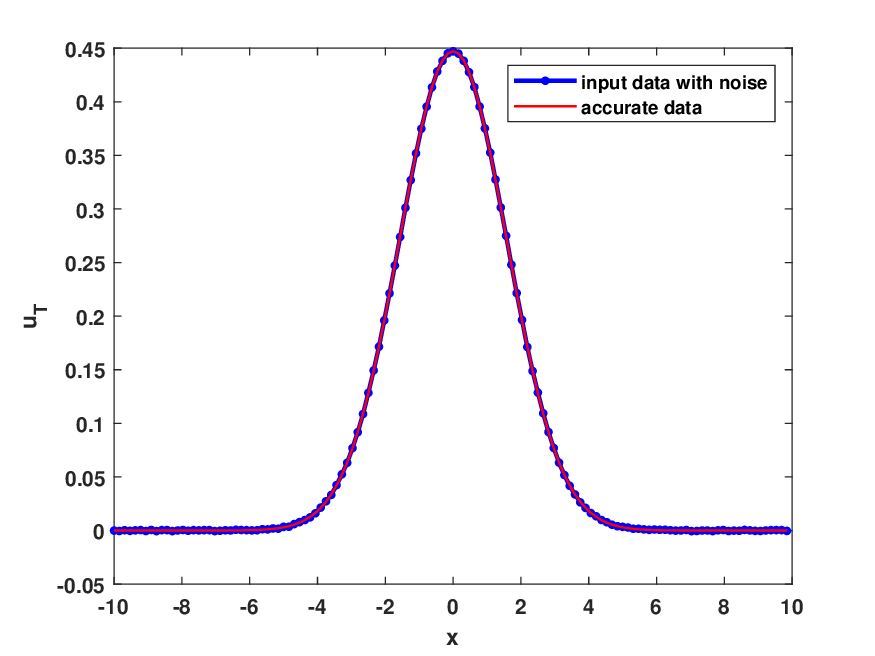}\\
	\includegraphics[width=0.3\linewidth]{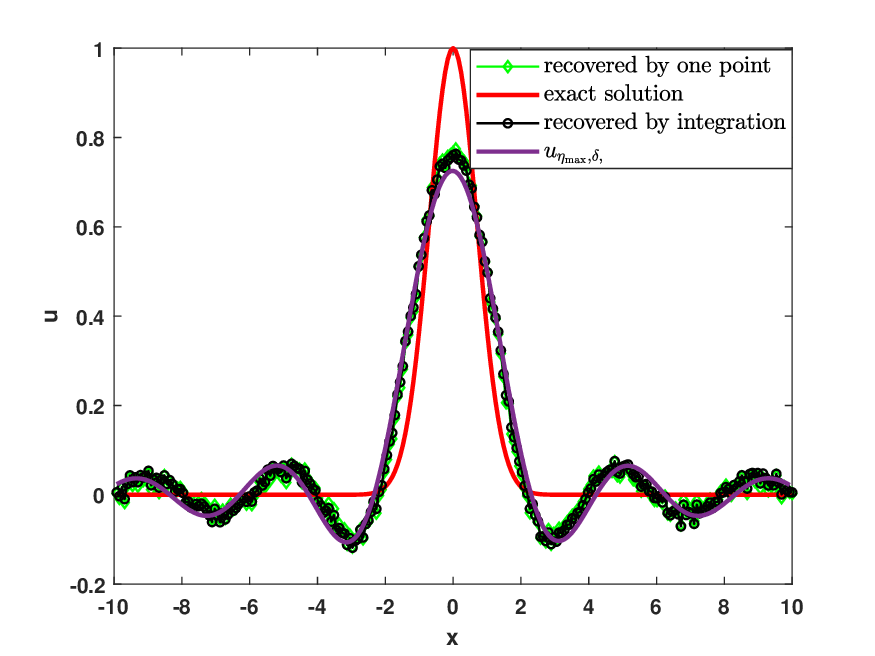}
	\includegraphics[width=0.3\linewidth]{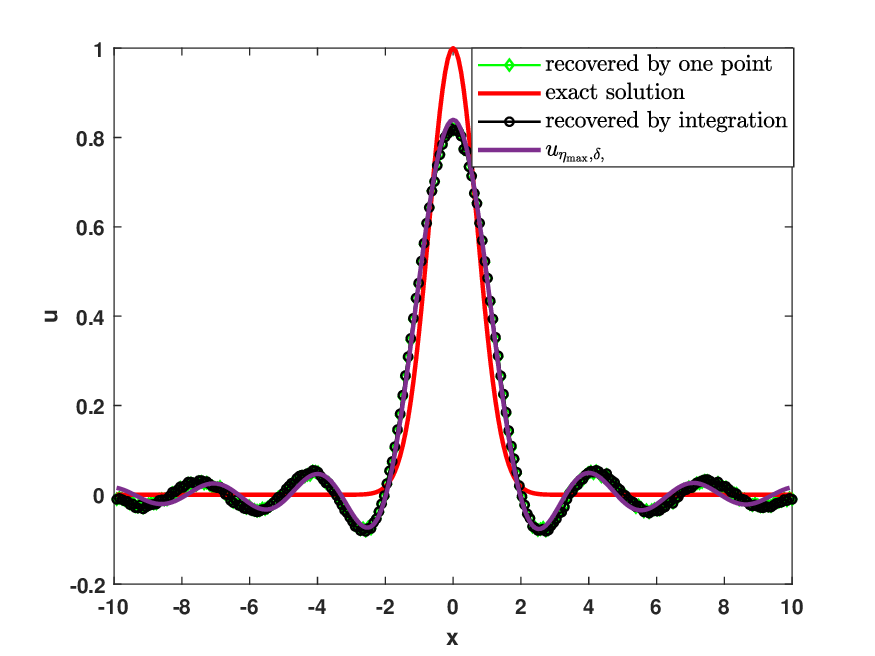}
	\includegraphics[width=0.3\linewidth]{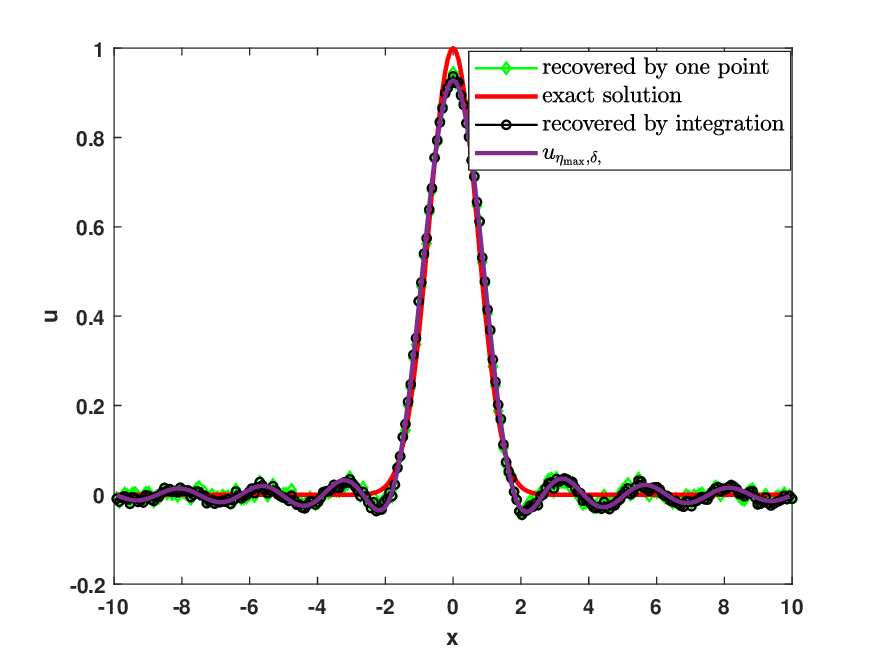}
	\caption{The first row: input data with different noise levels from left to right. The noise levels are 
		$6\times 10^{-2}$, $6\times 10^{-3}$ and $6\times 10^{-4}$.
		The second row: results of Schr\"odingerisation computed by \eqref{eq:tilde w}-\eqref{eq:uhd} using the input data above with $\eta_{\max} = 1.5,2,2.5$,respectively.
	}\label{fig:input with noise}
\end{figure}

\subsection{The  convection equation with imaginary wave speed}

Now we consider the  convection equations with imaginary wave speed to test the accuracy of the recovery. In this test, the exact solution is 
\begin{equation}\label{eq:test exactu}
	v(t,x) = \cos(3\pi(x+t)),
\end{equation}
and the simulation stops at $T=1$. Therefore the source term is obtained by $f= \partial_t u-i \partial_x u$ with $u$ defined in \eqref{eq:test exactu}. We use the spectral method on $p$ and $x$, then get the linear system:
\begin{equation}\label{eq:tilde wh imag}
	\frac{\D}{\D t} \tilde \Bw_h  = i(D_p\otimes D_x) \tilde \Bw_h + \tilde \Bf_h, \quad  
	\tilde\Bw_h(0) = (\Phi_p^{-1}\otimes\Phi^{-1}) \Bw_h(0),
\end{equation}
where $\tilde \Bf_h = (\Phi_p^{-1}\otimes\Phi^{-1})\Bf_h$, with $\Bf_h = \sum\limits_{j=0}^{M-1}\sum\limits_{l=0}^{N-1} f(x_j,t)e^{-|p_l|}(\Be_l^{(N)}\otimes \Be_j^{(M)})$.  
In order to obtain unitary dynamical systems to facilitate operation on quantum computers,  one needs to 
use the technique of dealing with source terms in \cite{JLM24}.
In Fig.~\ref{fig:image_convection}, one can find $p^{\Diamond} \approx 3\pi T $ which confirms Theorem~\ref{thm:recovery of u imag conv}, and the numerical solutions are close to the exact one. 
Since the solution is not smooth enough when using the spectral method, the oscillation appears in the error between $v$ and $v_h^d$ in Fig.~\ref{fig:image_convection}. 

\begin{figure}[htbp]
	\quad \quad 
	\includegraphics[width=0.42\linewidth]{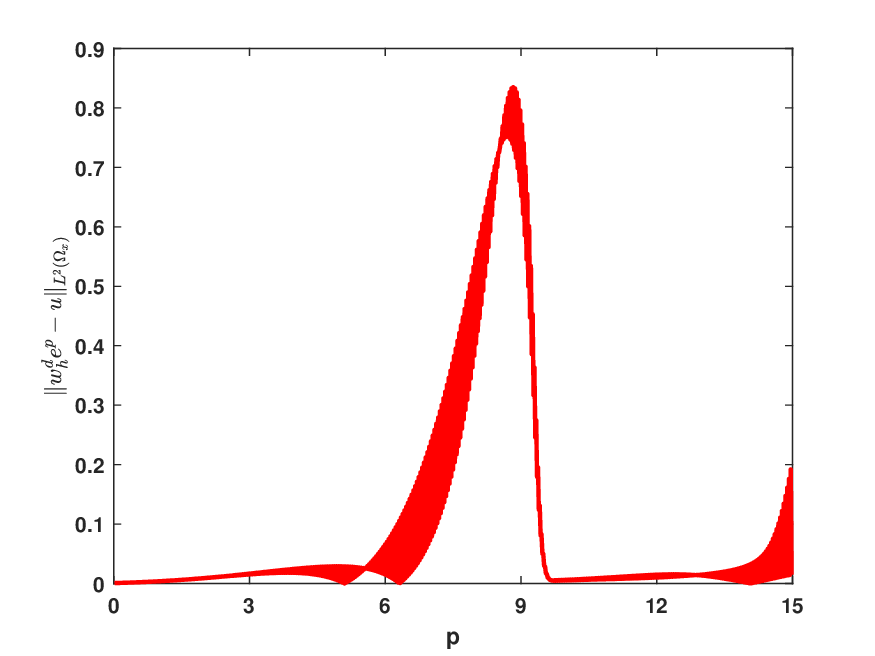}
	\includegraphics[width=0.42\linewidth]{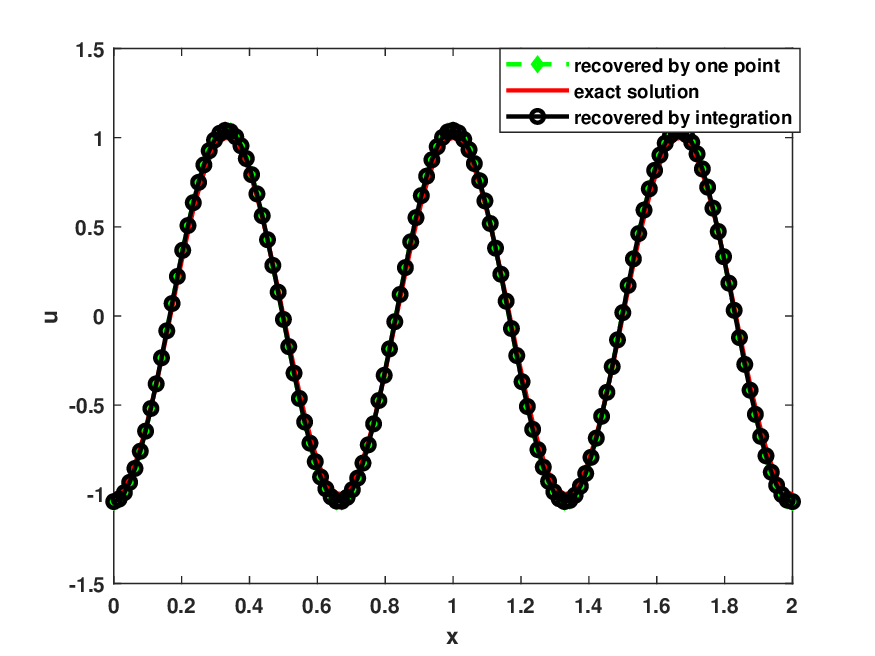}
	\caption{ Left: $\|w_h^de^{p}-v\|_{L^2(\Omega_x)}$
		with respect to $p$, where $w_h^d$ is from \eqref{eq:tilde w}. Right: the recovery from Schr\"odingerisation by choosing $p>p^{\Diamond}= 10$, where the numerical solution is obtained by  $v_d^*=\frac{\int_{10}^{15}w_h^d\;\d p}{e^{-10}-e^{-15}}$, 
		$p\in [-20,20]$, $\triangle p = \frac{10}{2^8}$ and $\triangle x = \frac{1}{2^6}$.
	}\label{fig:image_convection}
\end{figure}

\section{Quantum algorithms for ill-posed problems}\label{sec:quantum}
The above numerical sch- eme is a new scheme that can be used on both classical and quantum devices based on qubits or qumodes (continuous variables)\cite{CVPDE2023}.
\subsection{Qubit-based quantum computing}
	Since Eq.\eqref{eq:tilde w} is a Hamiltonian system,  a quantum simulation can be carried out such as quantum signal processing (QSP)\cite{QSP17}, linear combination of unitaries (LCU) \cite{LCU12}, etc.
	The complete circuit for implementing the quantum simulation of $\ket{\Bu_h(t)} = \Bu_h(t)/\|\Bu_h(t)\|$ is illustrated in Fig.~\ref{schr_circuit}, where $\ket{\Bu_h(t)}$ is a quantum state.
	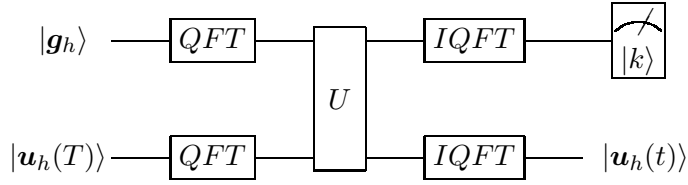
\begin{figure}[t!] 
		\centerline{
			\Qcircuit @C=1em @R=2em {
				\lstick{\hbox to  2em{$\ket{\Bg_h}$\hss}}
				& \qw
				& \gate{QFT}
				& \qw
				& \multigate{1}{U}
				& \qw
				& \gate{IQFT}
				& \qw
				& \qw  &\meterB{\ket{k}}\\
				\lstick{\hbox to  3em{$ \ket{\Bu_h(T)}$\hss}} 
				& \qw
				& \gate{QFT}
				& \qw	
				& \ghost{U}	
				& \qw 
				& \gate{IQFT}
				& \qw	 
				& \qw  & \hbox to 2.5em{$\ket{\Bu_h(t)}$\hss} 
			}  
		}
		\caption{Quantum circuit for Schr\"odingerisation of Eq.~\eqref{eq:tilde uh}, $U$ is a unitary matrix approximating $\Cu = \exp(i (D_p\otimes A) (T-t))$, and  QFT (IQFT) denotes the (inverse) quantum Fourier transform.}
		\label{schr_circuit}
	\end{figure}

For the specific ill-posed problems, the difference from well-posed problems lie in  (i) the cost of unitarily evolving the system in Eq.~\eqref{eq:tilde w}, and (ii) the cost of final measurement to retrieve $|\Bu_h(t)\rangle \equiv \vect{u}_h(t)/\|\Bu_h(t)\|$, $t<T$, depending on the specification of possible $p^{\Diamond}$ for different problems.

Since the evolution is unitary, the cost analysis is similar whether one is running forward or backward in time. The only difference is that the initial state used in the forward equation is $|\Bu_h(0)\rangle$ plus the ancilla state, whereas in the backward equation it is $|\Bu_h(T)\rangle$ plus the same ancilla state (see Fig. ~\ref{schr_circuit}).   
	In quantum algorithms, Hamiltonian simulation with nearly optimal dependence on all parameters can be found in \cite{BCK15}, with complexity given by the next lemma.
	\begin{lemma}\label{lem:hamiltonian complexity}
		For a Hamiltonian system $\frac{\D}{\D t} \Bu = -i H \Bu$ with $H$ a  Hermitian matrix acting on $m_H$ qubits can be simulated within error $\varepsilon$ with $\mathscr{O}\big( \tau \log(\frac{\tau}{\varepsilon})/\log\log (\frac{\tau}{\varepsilon})\big)$
		queries and  
		$\mathscr{O}\big(
		\tau  m_H L_{\text{polylog}}\big)$
		additional 2-qubits gates, where $\tau = s(H) \|H\|_{\max}T$, $s(H)$ is the sparsity of $H$ (maximum number of nonzero entries in each row),  $\|H\|_{\max}$ is the max-norm
		(value of largest entry in absolute value),
		$T$ is the evolution time, and 
		\begin{equation*}
			L_{\text{polylog}} = \big [1+\log_{2.5}(\tau /\varepsilon)\big]
			\frac{\log(\tau /\varepsilon)}{\log\log(\tau /\varepsilon)}.
		\end{equation*}
	\end{lemma} 
	
	In the case of Eq.\eqref{eq:tilde w}, since the spatial discretization is a second-order scheme $\triangle x^2\sim \varepsilon$, we need $\mathscr{O}(d\log(\frac{1}{\sqrt{\varepsilon}}))$ qubits  for $x$ variable where $d$ is the dimension. 
	It is crucial to note that if one employs the smooth initialization method for the Schr\"odingerized equation, as introduced in section \ref{higher-p}, and discretizes this smooth function in terms of $p$ instead, both the initial data and the solution in $p$ will belong to the class $C^k$ for any integer $k\geq 1$. Consequently, if a spectral method is utilized, this approach will yield essentially spectral accuracy in the approximations within the extended $p$ space.
	The extra qubits used in the extension domain $p$ is almost $\mathscr{O} (\log\log(\frac{1}{\varepsilon}))$.
	Therefore, the required number of qubits is $$m_H = \mathscr{O}(d\log(\frac{1}{\sqrt{\varepsilon}}) + \log\log(\frac{1}{\varepsilon})),$$
	the Hermitian matrix
	$H = D_p\otimes A$ satisfies $\|H\|_{\max} = \mathscr{O}(\frac{1}{\varepsilon}\log(\frac{1}{\varepsilon}))$ and the sparsity $s(H)=\mathscr{O}(d)$.
	By applying Lemma~\ref{lem:hamiltonian complexity}, one can directly obtain the complexity of Schr\"odingerisation as follows.
	\begin{lemma}
		Given sparse-access to the Hermitian matrix $H=D_p\otimes A$,  and the  initial quantum state $\ket{\Bu_h(T)}$ has been obtained.  The quantum state $\ket{\Bu_h(t)}$ can be
		prepared  to precision $\varepsilon$ with $\tilde{\mathscr{O}}(\frac{dT}{\varepsilon})$ queries   and  $ \tilde{\mathscr{O}}(\frac{d^2T}{\varepsilon}) $
		additional 2-qubits gates, where $\tilde{\mathscr{O}}$ denotes the case where all logarithmic factors are suppressed.
	\end{lemma}
	
	This {\it reduces} the complexity of the Schr\"odingerisation based quantum algorithms for any dynamical system introduced in \cite{JLY22a, JLY22b}, wherein the number of 2-qubit gates required for the heat equation is $\tilde{\mathscr{O}}
	(\frac{d^2T}{\varepsilon^{2}})$.
	Furthermore, this quantum approach demonstrates polynomial-level acceleration compared to classical algorithms for solving the forward heat equation.

Different values of $p^{\Diamond}$ affect the probability of retrieving the correct final quantum state $|\Bu_h(t)\rangle$ for $t<T$. The final step involves measurement of $p$. 
	There are in principle two schemes from Theorem~\ref{thm:recovery u}, where both schemes give a success probability proportional to $K(\|u(t) \|^2_{L^2(\Omega_x)}/ \|u(T)\|^2_{L^2(\Omega_x)})$, thus 
	$\mathscr{O}(K^{-1}\|u(T)\|^2_{L^2(\Omega_x)}/\|u(t)\|^2_{L^2(\Omega_x)})$ measurements are needed, 
	where $K$ is a factor depending on $p^{\Diamond}= \eta_{\max}^2 T$, for instance see \cite{JLY22a,CVPDE2023}.
The first scheme is the recovery by measurement of one point in Eq. \eqref{eq:recover by one point}. This corresponds to accepting the resulting state so long as the measured $p>\eta_{\max}^2 T$. In the qubit context for example, the largest value 
	$K \sim e^{-2 \eta^2_{\max}T} \sim e^{-2(\frac{\delta}{\varepsilon})^{\frac{2}{s}} T}$, if $u_T\in H^s(\Omega_x)$. 
The second scheme is the recovery through the integration method in Eq.~\eqref{eq:recover by quad}. For a bounded domain, the retrieval process then involves the projection of $|w(T)\rangle$ onto the $[\eta^2_{\max} T, \pi L]$ domain.  In this case, $K \sim (\int_{p^{\Diamond}}^{\pi L} e^{-p} dp)^2=(e^{-\pi L}+e^{-p^{\Diamond}})^2 \leq (e^{-\pi L}+e^{-\eta^2_{\max}T})^2\sim (e^{-\pi L}+e^{- (\frac{\delta}{\varepsilon})^{\frac{2}{s}}})^2)$. Thus, for large domains where $\exp(-\pi L) \sim 0$, the first and second schemes are comparable. However, for specific cases, such as $u=\alpha(t)\cos(\omega_0 x)$ in Remark~\ref{remark:recovery u}, the probability to retrieving the state $\ket{\Bu_h(0)}$ is about 
	\begin{equation*}
		\frac{e^{-2\eta_{\max}^2 T} \|u(0)\|_{L^2(\Omega_x)}^2}{\|u(T)\|_{L^2(\Omega_x)}^2} = \frac{e^{-2\eta_{\max}^2T} \alpha(0)^2}{\alpha(T)^2} = \frac{e^{-2\eta_{\max}^2T} e^{2\omega_0^2T}\alpha(T)^2}{\alpha(T)^2} =e^{2(\omega_0^2-\eta_{\max}^2) T},
	\end{equation*}
	which is close to $1$ if we choose $\eta_{\max} = \omega_0$.  
	This condition aligns with the requirement for choosing $\eta_{\max}$ to successfully recover $u$, as analyzed in Remark \ref{remark:recovery u}.

\subsection{Qumode-based analog quantum computing}

Since Eq.~\eqref{eq:up} is already of Schr\"odinger's form and can be rewritten with 
$|w(t)\rangle=\int \vect{w}(t, p)|p\rangle dp/ \|\vect{w}(t,p)\|$ obeying 
\begin{align} \label{eq:schrform1}
	\frac{\partial |w(t)\rangle}{\partial t}=-i(H \otimes \hat{P})|w(t)\rangle, \qquad |w_0\rangle=\int e^{-|p|}|p\rangle dp |u_0\rangle, 
\end{align}
in the continuous-variable formalism and $\hat{P} \leftrightarrow -i \partial/\partial p$ is the momentum operator conjugate to the position operator $\hat{X}_p$ which has $p$ as eigenvalues and $[\hat{X}_p, \hat{P}]=iI$. Since $H \otimes \hat{P}$ is clearly Hermitian then $|w(0)\rangle$ evolves by unitary evolution generated by the Hamiltonian $H \otimes \hat{P}$. We note that here, since we actually want to run the original PDE backwards in time, the $|u_0\rangle$ state above corresponds in fact to the state $|u(t=T)\rangle$. If we want the state at some $t<T$, then the time $\mathcal{T}$ to run Eq.~\eqref{eq:schrform1} is over $\mathcal{T}\in [0, T-t]$.

In the continuous-variable formulation, how $\eta_{\max}<\infty$ arises is more subtle. The finite $\eta_{\max}$ can come instead from a kind of energy constraint $E_{\max}  \sim \eta^2_{\max}$ on the continuous-variable ancillary quantum mode, where $\eta_{\max}$ is acting like the maximum momentum for the mode (conjugate momentum to $p$). At the measurement step, here $p^{\Diamond}_{\max}=\pi L$ is actually a maximum value of measurable $p$ by the physical measurement apparatus. The detailed analysis here is more subtle and depends on the physical system studied, so we leave this to future work. 

\section{Conclusions}
In this paper we present computationally stable numerical methods for two ill-posed problems:  the (constant and variable coefficient) backward heat equation and the linear convection equation with  imaginary wave speed.  The main idea is to use Schr\"odingerisation, which turns  typical ill-posed  problems into unitary dynamical systems in one higher dimension.
The Schr\"odingerised system is a Hamiltonian system valid both forward and backward in time, hence suitable for the design of a computationally stable numerical scheme. A key idea is given to  recover the solution to the original problem from the Schr\"odingerised system using data from appropriate domain in the extended space.  Some sharp error estimates between the approximate solution and exact solution are provided and also verified numerically. Moreover, a smooth initialization is introduced for the Schr\"odingerised system which leads to requiring an exponentially lower number of ancilla qubits for qubit-based quantum algorithms for any non-unitary dynamical system. 

In the future this technique will be generalized to more realistic and physically interesting ill-posed problems. 
\section*{Acknowledgement}
SJ and NL  are supported by NSFC grant No. 12341104,  the Shanghai Jiao Tong University 2030 Initiative and the Fundamental Research
Funds for the Central Universities. SJ was also partially supported by the NSFC grants No. 12031013.  NL also  acknowledges funding from the Science and Technology Program of Shanghai, China (21JC1402900). 
CM was partially supported by China Postdoctoral Science Foundation (No. 2023M732248) and Postdoctoral Innovative Talents Support Program (No. BX20230219).

\end{document}